\newcommand{\bm}[1]{\boldsymbol{#1}}
\newcommand{\bmr}[1]{\bm{\mr{#1}}}
\newcommand{\lj}{[ \hspace{-2pt} [}
\newcommand{\rj}{] \hspace{-2pt} ]}
\newcommand{\mb}[1]{\mathbb{#1}}
\newcommand{\mc}[1]{\mathcal{#1}}
\newcommand{\mr}[1]{\mathrm{#1}}
\newcommand{\jump}[1]{\lj #1 \rj}
\newcommand{\wt}[1]{ \widetilde{ #1}}
\newcommand{\wh}[1]{ \widehat{ #1}}
\newcommand{\tr}[1]{\ifmmode \mathrm{tr}\left( #1 \right) \else 
\text{tr} \left( #1 \right) \fi }
\newcommand\enorm[1]{|\!|\!| #1 |\!|\!|}
\newcommand{\npar}[1]{ \frac{\partial{#1}}{\partial \un} } 
\renewcommand{\d}[1]{\mathrm d \boldsymbol{#1}}
\newcommand\pnorm[1]{\| #1 \|_{\bm{\mathrm{p}}}}
\newcommand\unorm[1]{\| #1 \|_{\bm{\mathrm{u}}}}
\newcommand\comment[1]{}
\def\MTh{\mc{T}_h}
\def\MEh{\mc{E}_h}
\def\un{\bm{\mr{n}}}
\def\curl{\ifmmode \mathrm{curl} \else \text{curl}\fi}
\def\div{\ifmmode \mathrm{div} \else \text{div}\fi}
\def\RT{\ifmmode \mathrm{\bf RT} \else \text{\bf RT} \fi}
\def\ui{\bm{\mr i}}
\newtheorem{theorem}{Theorem}
\newtheorem{lemma}{Lemma}
\newtheorem{remark}{Remark}
\title[DLS Method for Helmholtz]{A Discontinuous Least Squares Finite
  Element Method for Helmholtz Equations}
\author[R. Li]{Ruo Li} \address{CAPT, LMAM and School of Mathematical
  Sciences, Peking University, Beijing 100871, P.R. China}
\email{rli@math.pku.edu.cn}
\author[Q.-C. Liu]{Qicheng Liu} \address{School of Mathematical
  Sciences, Peking University, Beijing 100871, P.R. China}
\email{qcliu@pku.edu.cn}
\author[F.-Y. Yang]{Fanyi Yang} \address{School of Mathematical
  Sciences, Peking University, Beijing 100871, P.R. China}
\email{yangfanyi@pku.edu.cn}
\begin{document}

\maketitle

\begin{abstract}
  We propose a discontinuous least squares finite element method for
  solving the Helmholtz equation. The method is based on the $L^2$
  norm least squares functional with the weak imposition of the
  continuity across the interior faces as well as the boundary
  conditions. We minimize the functional over the discontinuous
  polynomial spaces to seek numerical solutions. The wavenumber
  explicit error estimates to our method are established. The optimal
  convergence rate in the energy norm with respect to a fixed
  wavenumber is attained. The least squares functional can naturally
  serve as {\it a posteriori} estimator in the $h$-adaptive
  procedure. It is convenient to implement the code due to the usage
  of discontinuous elements. Numerical results in two and three
  dimensions are presented to verify the error estimates.

  \noindent \textbf{keywords}: Helmholtz equation, Least squares
  method, Discontinuous elements, Error estimates.
\end{abstract}


\section{Introduction}
\label{sec_introduction}
The Helmholtz equation is applicable in many physical applications
involving time-harmonic wave propagation phenomena such as linear
acoustics, elastodynamics and electrodynamics
\cite{Thompson1995Galerkin, Hu2020novel,Farhat2003discontinuous, 
Nguyen2015hybridizable}. These important applications drive people to
construct numerical methods to the Helmholtz equation 
\cite{Nguyen2015hybridizable}. The Helmholtz operator is indefinite 
with large wave numbers, which brings difficulties in developing 
efficient numerical schemes and establishing stability estimates 
\cite{Feng2009discontinuous}. It is well known that the quality of 
discrete numerical solutions to the Helmholtz equation dramatically 
depends on the wavenumber $k$, known as the pollution effect 
\cite{Babuska2000pollution}. In spite of such difficulties, there 
have been plenty of researches on numerical methods to this problem, 
such as finite element methods, spectral methods and discontinuous 
Galerkin methods. 

The finite element methods are widely used for solving the Helmholtz
equation. A common choice is to use the standard conforming elements
to approximate the solution.  We refer to \cite{Ihlenburg1995finite,
Ihlenburg1997finite, Melenk2011wavenumber} for more details of these
conforming methods.  Compared with conforming finite element methods,
discontinuous Galerkin methods (DGMs) have several attractive features
on the mesh structure \cite{Chen2013hybrid}. Without the continuity
condition across interelement boundaries, the DGMs can be easily
applied on the general mesh structure which may include different
shapes of the elements and hanging points, and can allow the
polynomial degrees be different from element to element. Thus, the
DGMs have been applied in the numerical simulation of Helmholtz
equation. We refer to \cite{Feng2009discontinuous,
Feng2011discontinuous, Congreve2019robust, Hoppe2013convergence,
Feng2013absolutely, Chen2013hybrid, Hiptmair2011plane} and the
references therein for some typical DGMs. 

The least squares finite element method (LSFEM) is a general
numerical method, which is based on the minimization of a quadratic
functional, and we refer to \cite{Bochev1998review} for an overview to
this method. The resulting system arising from most of the
above Galerkin finite element methods are indefinite with a large
wavenumber, while the LSFEM can always provide a positive definite
linear system \cite{Chang1990least, Chen2017first}. Considering this
attractive property, LSFEM has been applied to numerically solve the
Helmholtz equation \cite{Lee2000first, Chen2017first,
Thompson1995Galerkin, Chang1990least, Hu2020novel, Monk1999least}.

In this paper, we propose a discontinuous least squares finite element
method. We introduce an $L^2$ norm least squares functional involving
the proper penalty terms which weakly enforce the continuity across
the interior faces as well as the boundary conditions. The numerical
solution is sought by minimizing the functional over piecewise
polynomial spaces. Such similar ideas have been applied to many
problems, see \cite{Bensow2005div, Bensow2005discontinuous,
Bochev2012locally, li2019least}. With discontinuous elements, the
proposed method is easily implemented and has great flexibility on the
mesh structure. The discretized system is still shown to be symmetric
positive definite. Generally, the advantages of DGM and LSFEM are
combined in this method. 

In finite element methods, the pollution effect lies in the constant
$C$ of the error estimate as the wavenumber increases
\cite{Babuska2000pollution, Melenk2011wavenumber}. For the proposed
method, we establish the wavenumber explicit error estimate. Our
method is shown to be stable without any assumption on
the mesh size. We prove that with respect to the fixed wavenumber,
our method has an optimal convergence rate in the energy norm and a
sub-optimal convergence rate in the $L^2$ norm. We observe that the
constants in the energy error and $L^2$ error are of the order of
$O(k^2)$ and $O(k)$, respectively. Our theoretical estimates are 
verified by some numerical experiments in two and three dimensions. 
We also include an example to numerically explore the pollution 
effect as the wavenumber $k$ increases. We note that the least squares
functional naturally provides an {\it a posteriori} indicator, and
from this we present an $h$-adaptive algorithm and test its accuracy by
solving a low-regularity problem.

The rest of this paper is organized as follows. In Section
\ref{sec_preliminaries}, we introduce the notation and define the
first-order system to the Helmholtz equation. The $k$-explicit
stability result of the Helmholtz equation is also recalled in this
section. In Section \ref{sec_method}, we define the least squares
functional and propose our least squares method. The analysis of
errors is also given in this section. In Section
\ref{sec_numericalresults}, we conduct a series of 2D and 3D numerical
tests to demonstrate the accuracy of the proposed method.


\section{preliminaries}
\label{sec_preliminaries}
Let $\Omega_1 \in \mb{R}^d$ be an open, bounded, strictly star-shaped 
polygonal (polyhedral) domain, where $d = 2$ or $3$. $D \subset
\Omega_1$ is a star-shaped domain, which represents a scatterer. 
We define $\Omega = \Omega_1 \backslash D$ and $\Gamma_R = \partial 
\Omega_1$, $\Gamma_D = \partial D$. In this paper, we concern the
following Helmholtz equation: seek $u$ such that
\begin{equation}
  \begin{aligned}
    -\Delta u - k^2 u &= f &&\text{in } \Omega, \\
    u &= g_0 &&\text{on } \Gamma_D, \\
    \npar{u} + \ui k u &= g && \text{on } \Gamma_R, \\
  \end{aligned}
    \label{eq_H3}
\end{equation}
where $k>0$ is the wavenumber, $\ui = \sqrt{-1}$ is the imaginary
unit, and $\un$ denotes the unit outward normal to $\Omega$. The Robin
boundary condition of \eqref{eq_H3} is known as the first order
absorbing boundary condition \cite{Engquist1979radiation}. We allow
the case $D = \varnothing$.

We denote by $\MTh$ a shape-regular triangulation over the
domain $\Omega$. Let $\MEh^i$ be the collection of all $d - 1$
dimensional interior faces with respect to the partition $\MTh$,
$\MEh^D$ be the collection of all $d - 1$ dimensional faces that are
on the boundary $\Gamma_D$ and $\MEh^R$ be the collection of all $d-1$
dimensional faces that are on the boundary $\Gamma_R$. We then set
$\MEh:= \MEh^i \cup \MEh^D \cup \MEh^R$. For any element $K \in \MTh$
and any face $e \in \MEh$, we let $h_K$ and $h_e$ be their diameters,
respectively, and we denote by $h := \max_{K \in \MTh} h_K$ the mesh
size of $\MTh$. Then the shape regularity of $\MTh$ is in the sense of
that: there exists a constant $C>0$ such that 
\begin{displaymath}
  \frac{h_K}{\rho_K} \leq C,
\end{displaymath}
for any element $K \in \MTh$, and $\rho_K$ denotes the diameter of the
largest disk (ball) inscribed in $K$.

Next, we introduce the following trace operators which are commonly
used in the DG framework. 
For the scalar-valued piecewise smooth function $v$ and the
vector-valued piecewise smooth function $\bm{v}$, we 
define the jumps of $v$ and $\bm{v}$ on the interior face 
$e = \partial K^+ \cap \partial K^-$ as
\begin{displaymath}
  \begin{aligned}
    \jump{v} &:= v|_{K^+} \un^+ + v|_{K^-} \un^-, 
    \text{ for scalar-valued } v, \\
    \jump{\un \cdot \bm{v}} &:= \un^+ \cdot \bm{v}|_{K^+} + \un^-
    \cdot \bm{v}|_{K^-}, \text{ for vector-valued } \bm{v}, \\
  \end{aligned}
\end{displaymath}
where $\un^+$ and $\un^-$ are the unit outward normal to $e$ of $K^+$
and $K^-$, respectively. For the boundary face $e \in \MEh^D \cup
\MEh^R$, we set 
\begin{displaymath}
  \begin{aligned}
    \jump{v} &:= v \un, \text{ for scalar-valued } v, \\
    \jump{\un \cdot \bm{v}} &:= \un \cdot \bm{v}, 
    \text{ for vector-valued } \bm{v}, \\
  \end{aligned}
\end{displaymath}
where $\un$ is the unit outward normal to $e$. 

Given the bounded domain $Q$, we follow the standard notations
$L^2(Q)$, $L^2(Q)^d$, $H^r(Q)$ and $H^r(Q)^d$ to represent
the {\it complex-valued} Sobolev spaces with the regular exponent $r
\geq 0$.  The $L^2$ inner products to these spaces are defined as
\begin{displaymath}
  \begin{aligned}
    (u,v)_{L^2(Q)} &:= \int_Q u \ \overline{v} \ \d{x}, \text{ for
    scalar-valued Sobolev spaces}, \\
    (\bm{u},\bm{v})_{L^2(Q)} &:= \int_Q \bm{u} \cdot \overline{\bm{v}}
    \ \d{x},\text{ for vector-valued Sobolev spaces},
  \end{aligned}
\end{displaymath}
and the corresponding semi-norms and norms are induced from the $L^2$
inner products. Further, we denote by $H^r_D(\Omega)$ the space of
functions in $H^r(\Omega)$ with vanishing trace on $\Gamma_D$,
\begin{displaymath}
  H^r_D(\Omega) := \left\{ v \in H^r(\Omega) \ | \ v = 0, \text{ on }
  \Gamma_D \right\}.
\end{displaymath}
Besides, the following space will be used in our analysis,
\begin{displaymath}
  H(\div, \Omega) := \left\{ \bm{v} \in L^2(\Omega)^d \ | \ \nabla \cdot
  \bm{v} \in L^2(\Omega) \right\},
\end{displaymath}
with the norm
\begin{displaymath}
  \| \bm{v} \|_{H(\div, \Omega)}^2 := \| \bm{v} \|_{L^2(\Omega)}^2 +
  \| \nabla \cdot \bm{v} \|_{L^2(\Omega)}^2.
\end{displaymath}
For the partition $\MTh$, we will use the notations and the
definitions for the broken Sobolev space $L^2(\MTh)$, $L^2(\MTh)^d$,
$H^r(\MTh)$ and $H^r(\MTh)^d$ with the exponent $r \geq 0$ and their
associated inner products and norms \cite{arnold2002unified}. We note 
that the capital $C$ with or without subscripts are generic positive 
constants, which are possibly different from line to line, but are 
independent of the mesh size $h$ and the wavenumber $k$.

Under the assumptions of the domain $\Omega$, the following
$k$-explicit stability result of the Helmholtz equation holds true,
which is critical in our error estimates:
\begin{theorem}
Suppose $g_0 = 0$, and $\Omega_1$ is a strictly star-shaped domain and $D
\subset \Omega_1$ is a star-shaped domain. Let $k_0$ be an arbitrary 
strictly positive number. Then there is a constant $C > 0$ such that 
for any $f \in L^2(\Omega)$ , $g \in L^2(\Gamma_R)$, and $k
\geq k_0$, the Helmholtz equation \eqref{eq_H3} has a unique 
solution $u \in H_D^1(\Omega)$ satisfying
\begin{equation}
  k\| u \|_{L^2(\Omega)} + \| \nabla u \|_{L^2(\Omega)} \leq C \left(
  \| f \|_{L^2(\Omega)} + \| g \|_{L^2(\Gamma_R)} \right).
  \label{eq_stability}
\end{equation}
\label{th_stability}
\end{theorem}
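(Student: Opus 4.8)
The plan is to follow the classical Rellich--Morawetz multiplier technique for wavenumber-explicit Helmholtz estimates. For well-posedness, I would first observe that the variational problem for $u \in H^1_D(\Omega)$ associated with \eqref{eq_H3} has sesquilinear form $a(u,v) = (\nabla u,\nabla v)_{L^2(\Omega)} - k^2(u,v)_{L^2(\Omega)} + \ui k(u,v)_{L^2(\Gamma_R)}$, which is a compact perturbation of the coercive form obtained by replacing $-k^2$ with $+1$ (the perturbation being compact since $H^1(\Omega)\hookrightarrow L^2(\Omega)$ compactly). By the Fredholm alternative, existence is then equivalent to uniqueness; and uniqueness follows by setting $f=g=0$, testing with $u$, and taking the imaginary part, which gives $\|u\|_{L^2(\Gamma_R)}=0$ and hence $\partial_{\un}u = -\ui k u = 0$ on $\Gamma_R$, whereupon a standard unique continuation argument (extending $u$ by zero across $\Gamma_R$) forces $u\equiv 0$.

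For the a priori bound, the first ingredient is the basic energy identity: testing \eqref{eq_H3} with $u$ and integrating by parts, using $g_0=0$ on $\Gamma_D$ and $\partial_{\un}u = g - \ui k u$ on $\Gamma_R$, yields $\|\nabla u\|_{L^2(\Omega)}^2 - k^2\|u\|_{L^2(\Omega)}^2 + \ui k\|u\|_{L^2(\Gamma_R)}^2 = (f,u)_{L^2(\Omega)} + (g,u)_{L^2(\Gamma_R)}$. Its imaginary part gives $k\|u\|_{L^2(\Gamma_R)}^2 \le \|f\|_{L^2(\Omega)}\|u\|_{L^2(\Omega)} + \|g\|_{L^2(\Gamma_R)}\|u\|_{L^2(\Gamma_R)}$, and a Young step turns this into a bound of $k^2\|u\|_{L^2(\Gamma_R)}^2$ and of $\|u\|_{L^2(\Gamma_R)}^2$ by $\varepsilon k^2\|u\|_{L^2(\Omega)}^2 + C_\varepsilon\|f\|_{L^2(\Omega)}^2 + C\|g\|_{L^2(\Gamma_R)}^2$; this is the device that later absorbs every surface norm of $u$.

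The second ingredient is the Rellich identity. After translating so that $\Omega_1$ is star-shaped about the origin with $\bm{x}\cdot\un \ge c_0 > 0$ on $\Gamma_R$ (strict star-shapedness) and $\bm{x}\cdot\un \le 0$ on $\Gamma_D$ (the hypothesis on $D$), one tests \eqref{eq_H3} with $\bm{x}\cdot\nabla u$, takes twice the real part, and integrates by parts twice; the standard computation gives $2\,\mathrm{Re}(f,\bm{x}\cdot\nabla u)_{L^2(\Omega)} = (2-d)\|\nabla u\|_{L^2(\Omega)}^2 + dk^2\|u\|_{L^2(\Omega)}^2 + B$, where $B$ collects the boundary integrals $\int_{\partial\Omega}(\bm{x}\cdot\un)(|\nabla u|^2 - k^2|u|^2) - 2\,\mathrm{Re}\int_{\partial\Omega}\partial_{\un}u\,\overline{\bm{x}\cdot\nabla u}$. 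Adding $(d-1)$ times the real part of the energy identity makes the volume terms collapse exactly to $\|\nabla u\|_{L^2(\Omega)}^2 + k^2\|u\|_{L^2(\Omega)}^2$, so this quantity equals $2\,\mathrm{Re}(f,\bm{x}\cdot\nabla u)_{L^2(\Omega)} + (d-1)\,\mathrm{Re}(f,u)_{L^2(\Omega)} + (d-1)\,\mathrm{Re}(g,u)_{L^2(\Gamma_R)} - B$. The volume data terms are absorbed by Cauchy--Schwarz and Young into $\varepsilon\|\nabla u\|_{L^2(\Omega)}^2 + \varepsilon k^2\|u\|_{L^2(\Omega)}^2 + C_\varepsilon\|f\|_{L^2(\Omega)}^2$ (using $\|\bm{x}\cdot\nabla u\|_{L^2(\Omega)} \le C\|\nabla u\|_{L^2(\Omega)}$ and $k\ge k_0$), and on $\Gamma_D$, since $u=0$ forces $\nabla u = (\partial_{\un}u)\un$, the boundary term reduces to $\int_{\Gamma_D}(\bm{x}\cdot\un)|\partial_{\un}u|^2 \le 0$, which is harmless.

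The crux is the $\Gamma_R$ part of $B$. Splitting $\nabla u = (\partial_{\un}u)\un + \nabla_T u$ and $\bm{x} = (\bm{x}\cdot\un)\un + \bm{x}_T$, its contribution to the right-hand side is $\int_{\Gamma_R}(\bm{x}\cdot\un)\big(|\partial_{\un}u|^2 - |\nabla_T u|^2 + k^2|u|^2\big) + 2\,\mathrm{Re}\int_{\Gamma_R}\partial_{\un}u\,\overline{\bm{x}_T\cdot\nabla_T u} + (d-1)\,\mathrm{Re}\int_{\Gamma_R}g\bar u$. The dangerous term is the cross term, since neither $\partial_{\un}u$ nor $\nabla_T u$ is controlled on $\Gamma_R$ a priori; here strict star-shapedness is essential: Young's inequality with a large parameter $\lambda$ bounds $2|\partial_{\un}u|\,|\bm{x}_T\cdot\nabla_T u| \le \lambda(\bm{x}\cdot\un)|\partial_{\un}u|^2 + \lambda^{-1}(\bm{x}\cdot\un)^{-1}|\bm{x}_T|^2|\nabla_T u|^2$, and since $\bm{x}\cdot\un \ge c_0$ and $|\bm{x}_T|$ is bounded on $\Gamma_R$, choosing $\lambda$ large makes the $|\nabla_T u|^2$ term absorbable by $-\int_{\Gamma_R}(\bm{x}\cdot\un)|\nabla_T u|^2$. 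What remains are multiples of $\int_{\Gamma_R}(\bm{x}\cdot\un)|\partial_{\un}u|^2$, $k^2\int_{\Gamma_R}|u|^2$, and $\int_{\Gamma_R}g\bar u$; substituting $\partial_{\un}u = g - \ui k u$ turns the first into a bound $\lesssim \|g\|_{L^2(\Gamma_R)}^2 + k^2\|u\|_{L^2(\Gamma_R)}^2$, after which every surface norm of $u$ is removed by the energy identity of the previous step. Collecting terms and taking $\varepsilon$ small enough to absorb all $\varepsilon$-terms into the left-hand side yields $\|\nabla u\|_{L^2(\Omega)}^2 + k^2\|u\|_{L^2(\Omega)}^2 \le C\big(\|f\|_{L^2(\Omega)}^2 + \|g\|_{L^2(\Gamma_R)}^2\big)$, which is \eqref{eq_stability}. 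I expect the $\Gamma_R$ boundary term to be the only genuinely delicate point — in particular the large-$\lambda$ absorption of the tangential gradient, which needs the \emph{strict} star-shapedness, together with justifying the Rellich integration by parts for a merely star-shaped polytope (a density/corner-regularity argument); the rest is routine integration by parts and Young's inequality.
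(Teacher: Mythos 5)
Your proposal is correct, but note that the paper does not prove Theorem \ref{th_stability} at all: it simply quotes the estimate from the cited reference (Hetmaniuk, Section 3.4), and your argument --- Fredholm alternative plus unique continuation for well-posedness, the energy identity from testing with $u$, and the Rellich--Morawetz multiplier $\bm{x}\cdot\nabla u$ with the sign conditions $\bm{x}\cdot\un\geq c_0$ on $\Gamma_R$ and $\bm{x}\cdot\un\leq 0$ on $\Gamma_D$ --- is essentially the proof given there. The two points you flag as delicate, namely justifying the Rellich integration by parts for an $H^1$ solution on a nonsmooth (polygonal/polyhedral) domain and taking $\Omega_1$ and $D$ star-shaped with respect to a common point, are exactly the technical details supplied in that reference, so nothing essential is missing from your outline.
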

We refer to \cite[Section 3.4]{Hetmaniuk2007stability} for details of 
this result.

In this paper, we propose a least squares finite element method for
the Helmholtz equation \eqref{eq_H3} based on the discontinuous
approximation. We begin by introducing an auxiliary 
variable $\bm{p} = \frac{1}{k} \nabla u$ to recast the Helmholtz 
equation \eqref{eq_H3} into a first-order system, 
\begin{equation}
  \begin{aligned}
    -\nabla \cdot \bm{p} - k u = \wt{f}, &\quad \text{in }
    \Omega, \\
    \nabla u - k \bm{p} = \bm{0}, &\quad \text{in }
    \Omega, \\
    u = g_0, &\quad \text{on } \Gamma_D, \\
    \un \cdot \bm{p} + \ui u = \wt{g}, &\quad \text{on } \Gamma_R,
  \end{aligned}
  \label{eq_firstHelmholtz}
\end{equation}
where $\wt{f} = \frac{1}{k} f$ and $\wt{g} = \frac{1}{k} g$. The
variable $u$ and $\bm{p}$ give the electric field and the magnetic
field, respectively. Rewriting the problem into a
first-order system is the fundamental idea in the modern least squares
finite element method \cite{Bochev1998review, Lee2000first,
Chen2017first}, and our discontinuous least squares
method is then based on the system \eqref{eq_firstHelmholtz}.

\section{Discontinuous Least Squares Method for Helmholtz Equation}
\label{sec_method}
Aiming to construct a discontinuous least squares finite element
method for the system \eqref{eq_firstHelmholtz}, we first define a
least squares functional based on \eqref{eq_firstHelmholtz}, which
reads
\begin{equation}
  \begin{aligned}
    J_h(u, \bm{p}) &:= \sum_{K \in \MTh} \left( \| \nabla \cdot \bm{p}
    + ku + \wt{f} \|^2_{L^2(K)} + \| \nabla u - k\bm{p} \|^2_{L^2(K)}
    \right) \\
    &+ \sum_{e \in \MEh^i} \frac{1}{h_e} \left( \| \jump{u}
    \|^2_{L^2(e)} + \| \jump{\un \cdot \bm{p}} \|^2_{L^2(e)} \right) \\
    &+ \sum_{e \in \MEh^D} \frac{1}{h_e} \| u - g_0 \|^2_{L^2(e)}
    + \sum_{e \in \MEh^R} \frac{1}{h_e}
    \| \un \cdot \bm{p} + \ui u - \wt{g} \|^2_{L^2(e)},
  \end{aligned}
  \label{eq_functional}
\end{equation}
The terms in \eqref{eq_functional} defined on $\MEh^i$, $\MEh^D$ and 
$\MEh^R$ weakly impose the continuity condition and the boundary
condition, respectively.

Then we introduce two approximation spaces $\bmr{V}_h^m$ and 
$\bmr{\Sigma}_h^m$ for the variables $u$ and $\bm{p}$, respectively:  
\begin{displaymath}                                                   
  \bmr{V}_h^m := V_h^m, \qquad \bmr{\Sigma}_h^m := (V_h^m)^d,         
\end{displaymath}                                                     
where $V_h^m$ is the {\it complex-valued} piecewise polynomial space,
\begin{displaymath}
  V_h^m := \left\{ v_h \in L^2(\Omega) \ | \ v_h|_K \in \mb{P}_m(K),
    \  \forall K \in \MTh \right\}.                                                   
\end{displaymath}                                                     
One can write any function $v_h \in \bmr{V}_h^m$ and any function
$\bm{q}_h \in \bmr{\Sigma}_h^m$ as                                             
\begin{equation*}                                                
  v_h = \sum_l v_l \varphi_l, \quad \bm{q}_h = \sum_l q_l \bm{\psi}_l,    
\end{equation*}                                                       
where $\{ \varphi_l \}$ is a basis of the standard real-valued scalar
piecewise polynomial space, and $\{ \bm{\psi_l} \}$ is a basis of the
standard real-valued vector piecewise polynomial space, and $\{v_l\}$
and $\{ q_l \}$ are both complex combination coefficients. Apparently,
the functions in both spaces $\bmr{V}_h^m$ and $\bmr{\Sigma}_h^m$ may
be discontinuous across interior faces.  In this paper, we seek the
numerical solution $(u_h, \bm{p}_h) \in \bmr{V}_h^m \times
\bmr{\Sigma}_h^m$ by minimizing the functional \eqref{eq_functional}
over the space $\bmr{V}_h^m \times \bmr{\Sigma}_h^m$, which takes the
form:
\begin{equation}
  (u_h, \bm{p}_h) = \mathop{\arg \min}_{(v_h, \bm{q}_h) \in
  \bmr{V}_h^m \times \bmr{\Sigma}_h^m} J_h(v_h, \bm{q}_h).
  \label{eq_minJ}
\end{equation}
To solve the minimization problem \eqref{eq_minJ}, we can write the
corresponding Euler-Lagrange equation, which reads:
{\it find $(u_h, \bm{p}_h) \in \bmr{V}_h^m \times \bmr{\Sigma}_h^m$
such that}
\begin{equation}
  a_h(u_h, \bm{p}_h; v_h, \bm{q}_h) = l_h(v_h,
  \bm{q}_h), \qquad \forall (v_h, \bm{q}_h) \in \bmr{V}_h^m \times
  \bmr{\Sigma}_h^m,
  \label{eq_bilinear}
\end{equation}
where the bilinear form $a_h(\cdot; \cdot)$ and the linear form
$l_h(\cdot)$ are defined as
\begin{equation}
  \begin{aligned}
    a_h(u_h, \bm{p}_h; v_h, \bm{q}_h) &:= \sum_{K \in \MTh} \int_K
    (\nabla \cdot \bm{p}_h + ku_h) \ \overline{(\nabla \cdot \bm{q}_h +
    kv_h)} \d{x} \\
    &+ \sum_{K \in \MTh} \int_K (\nabla u_h - k\bm{p}_h) \cdot
    \overline{(\nabla v_h - k\bm{q}_h)} \d{x} \\
    &+ \sum_{e \in \MEh^i} \frac{1}{h_e} \left( \int_e \jump{u_h}
    \cdot \overline{\jump{v_h}} \d{s} + \int_e \jump{\un \cdot
    \bm{p}_h} \ \overline{\jump{\un \cdot \bm{q}_h}} \d{s} \right) \\
    &+ \sum_{e \in \MEh^D} \frac{1}{h_e} \int_e u_h \ \overline{v_h}
    \d{s} + \sum_{e \in \MEh^R} \frac{1}{h_e} \int_e (\un
    \cdot \bm{p}_h + \ui u_h) \ \overline{(\un \cdot \bm{q}_h + \ui
    v_h)} \d{s},
  \end{aligned}
  \label{eq_bilinearform}
\end{equation}
and
\begin{displaymath}
  \begin{aligned}
    l_h(v_h, \bm{q}_h) &:= \sum_{K \in \MTh} \int_K \wt{f} \ 
    \overline{\nabla \cdot \bm{q}_h} \d{x} +
    \sum_{K \in \MTh} \int_K f \  \overline{v_h}  \d{x} \\
    &+ \sum_{e \in \MEh^D} \frac{1}{h_e} \int_e g_0 \ \overline{v_h}
    \d{s} \\
    &+ \sum_{e \in \MEh^R} \frac{1}{h_e} \int_e \wt{g} \ 
    \overline{(\un \cdot q_h + \ui v_h)} \d{s}
  \end{aligned}
\end{displaymath}
Next, we will derive the error estimates to the problem
\eqref{eq_bilinear} and focus on how the error bounds depend on the
wavenumber $k$.
To do so, we first define two spaces $\bmr{V}_h$ and $\bmr{\Sigma}_h$
for variables $u$ and $\bm{p}$, respectively, as
\begin{displaymath}                                                   
  \bmr{V}_h := \bmr{V}_h^m + H_D^1(\Omega), \qquad \bmr{\Sigma}_h :=  
  \bmr{\Sigma}_h^m + H(\div, \Omega),
\end{displaymath}
which are equipped with the following energy norms,
\begin{displaymath}
  \unorm{u}^2 := \sum_{K \in \MTh} \left( k^2 \| u \|^2_{L^2(K)} + 
  \| \nabla u \|^2_{L^2(K)} \right) + \sum_{e \in \MEh^i \cup \MEh^D} 
  \frac{1}{h_e} \| \jump{u} \|_{L^2(e)}^2, \qquad
  \forall u \in \bmr{V}_h,
\end{displaymath}
and
\begin{displaymath}
  \pnorm{\bm{p}}^2 := \sum_{K \in \MTh} \left( k^2 \| \bm{p} 
  \|^2_{L^2(K)} + \| \nabla \cdot \bm{p} \|^2_{L^2(K)} \right)
  + \sum_{e \in \MEh^i} \frac{1}{h_e} \| \jump{\un \cdot \bm{p}} 
  \|_{L^2(e)}^2, \qquad \forall \bm{p} \in \bmr{\Sigma}_h,
\end{displaymath}
and we define the energy norm $\enorm{\cdot}$ as
\begin{displaymath}
  \enorm{(u, \bm{p})}^2 := \unorm{u}^2 + \pnorm{\bm{p}}^2 + \sum_{e \in
  \MEh^R} \frac{1}{h_e} \| \un \cdot \bm{p} + \ui u \|^2_{L^2(e)}, 
  \qquad \forall (u, \bm{p}) \in \bmr{V}_h \times \bmr{\Sigma}_h.
\end{displaymath}
It is easy to see that $\unorm{\cdot}$, $\pnorm{\cdot}$ and 
$\enorm{\cdot}$ are well-defined norms for their corresponding spaces.

We will derive the error estimates for the numerical solution to 
the problem \eqref{eq_bilinear} under 
the Lax-Milgram framework, which requires us to indicate the
continuity and the coercivity of the bilinear form
\eqref{eq_bilinear}. We first state the continuity result of the
bilinear form $a_h(\cdot; \cdot)$ under the norm $\enorm{\cdot}$.

\begin{lemma}
  Let the bilinear form $a_h(\cdot; \cdot)$ be defined as 
  \eqref{eq_bilinearform}, there exists
  a constant $C$ such that
  \begin{equation}
    | a_h(u, \bm{p}; v, \bm{q}) | \leq C \enorm{(u, \bm{p})} \enorm{(v,
    \bm{q})},
    \label{eq_continuity}
  \end{equation}
  for any $(u, \bm{p}), (v, \bm{q}) \in \bmr{V}_h \times
  \bmr{\Sigma}_h$.
  \label{le_continuity}
\end{lemma}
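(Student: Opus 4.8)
The plan is to prove \eqref{eq_continuity} by a completely elementary, term-by-term argument: the bilinear form $a_h(\cdot;\cdot)$ in \eqref{eq_bilinearform} is a sum of five groups of terms (two volume integrals over $\MTh$, the interior-face jump terms over $\MEh^i$, the Dirichlet boundary terms over $\MEh^D$, and the Robin boundary terms over $\MEh^R$), and for each group I would apply the Cauchy--Schwarz inequality first on each element or face and then the discrete Cauchy--Schwarz inequality over the sum, observing at the end that every factor produced is dominated by $\enorm{(u,\bm p)}$ or $\enorm{(v,\bm q)}$. No trace or inverse inequalities are needed, because the $1/h_e$ weights on the faces and the $k^2$ weights in the volume part enter $\enorm{\cdot}$ in exactly the form in which they appear in $a_h$.

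First I would treat the two volume contributions. Using the triangle inequality, $\| \nabla \cdot \bm p + ku \|_{L^2(K)} \le \| \nabla \cdot \bm p \|_{L^2(K)} + k\| u \|_{L^2(K)}$ and $\| \nabla u - k\bm p \|_{L^2(K)} \le \| \nabla u \|_{L^2(K)} + k\| \bm p \|_{L^2(K)}$, together with $(a+b)^2 \le 2a^2 + 2b^2$, gives $\sum_{K} \| \nabla \cdot \bm p + ku \|_{L^2(K)}^2 \le C\big( \sum_K k^2\|u\|_{L^2(K)}^2 + \|\nabla\cdot\bm p\|_{L^2(K)}^2 \big) \le C\,\enorm{(u,\bm p)}^2$, and similarly $\sum_{K} \| \nabla u - k\bm p \|_{L^2(K)}^2 \le C\,\enorm{(u,\bm p)}^2$. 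Applying Cauchy--Schwarz on $\int_K (\nabla\cdot\bm p + ku)\overline{(\nabla\cdot\bm q + kv)}\,\d x$ and on $\int_K (\nabla u - k\bm p)\cdot\overline{(\nabla v - k\bm q)}\,\d x$, then summing over $K$, bounds the two volume groups by $C\,\enorm{(u,\bm p)}\,\enorm{(v,\bm q)}$.

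Next the face terms. For $e \in \MEh^i$, Cauchy--Schwarz on each face and then over the sum bounds $\sum_{e\in\MEh^i}\frac{1}{h_e}\int_e \jump{u}\cdot\overline{\jump{v}}\,\d s$ and $\sum_{e\in\MEh^i}\frac{1}{h_e}\int_e \jump{\un\cdot\bm p}\,\overline{\jump{\un\cdot\bm q}}\,\d s$ by products of the factors $\left(\sum_{e\in\MEh^i}\frac{1}{h_e}\|\jump{u}\|_{L^2(e)}^2\right)^{1/2}$ and $\left(\sum_{e\in\MEh^i}\frac{1}{h_e}\|\jump{\un\cdot\bm p}\|_{L^2(e)}^2\right)^{1/2}$, which are pieces of $\unorm{u}$ and $\pnorm{\bm p}$ respectively. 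For $e\in\MEh^D$ I would use that $\jump{u} = u\un$ with $|\un|=1$, so $\|u\|_{L^2(e)} = \|\jump{u}\|_{L^2(e)}$; hence $\sum_{e\in\MEh^D}\frac{1}{h_e}\int_e u\overline{v}\,\d s$ is controlled by the $\MEh^D$ part of $\unorm{u}\,\unorm{v}$. Finally, Cauchy--Schwarz applied directly to $\sum_{e\in\MEh^R}\frac{1}{h_e}\int_e(\un\cdot\bm p + \ui u)\overline{(\un\cdot\bm q + \ui v)}\,\d s$ produces the factors $\left(\sum_{e\in\MEh^R}\frac{1}{h_e}\|\un\cdot\bm p + \ui u\|_{L^2(e)}^2\right)^{1/2}$, which is precisely the last term in the definition of $\enorm{\cdot}$.

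Adding the five estimates and using $\sum_j a_j b_j \le \left(\sum_j a_j^2\right)^{1/2}\left(\sum_j b_j^2\right)^{1/2}$ once more yields $|a_h(u,\bm p;v,\bm q)| \le C\,\enorm{(u,\bm p)}\,\enorm{(v,\bm q)}$, as claimed. \textbf{There is essentially no obstacle here}: every norm appearing on the right-hand side of the term-by-term estimates is literally a summand of $\enorm{\cdot}$ (or, in the Dirichlet case, equal to one), so the only point requiring mild care is bookkeeping --- tracking that all constants are independent of $h$ and $k$, which is automatic since the triangle-inequality step for the volume terms introduces only an absolute constant and the weighting in $\enorm{\cdot}$ matches that in $a_h$.
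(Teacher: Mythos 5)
Your proposal is correct and follows essentially the same route as the paper's own proof, which applies the Cauchy--Schwarz inequality to a representative term and notes that the remaining terms are bounded in the same way; your version merely spells out the triangle-inequality step for the volume terms and the term-by-term bookkeeping explicitly.
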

\begin{proof}
  Using the Cauchy-Schwartz inequality, we have that
  \begin{displaymath}
    \sum_{K \in \MTh} \int_K \nabla \cdot \bm{p}_h \ \overline{\nabla
    \cdot \bm{q}_h} \d{x} \leq \left( \sum_{K \in \MTh} \| \nabla \cdot
    \bm{p}_h \|^2_{L^2(K)} \right)^{\frac{1}{2}} \left( \sum_{K \in
    \MTh} \| \nabla \cdot \bm{q}_h \|^2_{L^2(K)} \right)^{\frac{1}{2}}.
  \end{displaymath}
  Other terms that appear in the bilinear form \eqref{eq_bilinearform}
  can be bounded similarly, which gives us the inequality
  \eqref{eq_continuity} and completes the proof.
\end{proof}

Then we will focus on the coercivity to the bilinear form $a_h(\cdot;
\cdot)$. We first prove the stability property in the continuous 
level by making the use of result \eqref{eq_stability}. In this step, 
the wavenumber $k$ is extracted from the constant that appeared in 
the inequality, which allows us to obtain $k$-explicit error estimates.  

\begin{lemma}
  Let $k_0$ be an arbitrary strictly positive number. For $k \geq
  k_0$, there exists a constant $C$ such that
  \begin{equation}
    \unorm{u} + \pnorm{\bm{p}} \leq C k \left( \|
    \nabla u - k\bm{p} \|_{L^2(\Omega)} + \| \nabla \cdot \bm{p} + ku
    \|_{L^2(\Omega)} + \| \un \cdot \bm{p} + \ui u \|_{L^2(\Gamma_R)}
    \right), 
    \label{eq_HelmholtzInequality}
  \end{equation}
  for all $u \in H_{D}^1(\Omega)$ and $\bm{p} \in H(\div, \Omega)$.
  \label{le_HelmholtzInequality}
\end{lemma}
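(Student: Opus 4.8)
The plan is to deduce \eqref{eq_HelmholtzInequality} from the $k$-explicit Helmholtz stability of Theorem \ref{th_stability} together with a duality argument. First I would observe that, because $u \in H_D^1(\Omega)$ has vanishing jumps on $\MEh^i \cup \MEh^D$ and $\bm{p} \in H(\div, \Omega)$ has vanishing normal jumps on $\MEh^i$, the energy norms collapse to $\unorm{u}^2 = k^2\|u\|_{L^2(\Omega)}^2 + \|\nabla u\|_{L^2(\Omega)}^2$ and $\pnorm{\bm{p}}^2 = k^2\|\bm{p}\|_{L^2(\Omega)}^2 + \|\div\bm{p}\|_{L^2(\Omega)}^2$. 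Setting $\bm{r} := \nabla u - k\bm{p}$, $w := \div\bm{p} + ku$, $\tau := \un \cdot \bm{p} + \ui u$ on $\Gamma_R$, and $\mc{R} := \|\bm{r}\|_{L^2(\Omega)} + \|w\|_{L^2(\Omega)} + \|\tau\|_{L^2(\Gamma_R)}$, and using $\nabla u = \bm{r} + k\bm{p}$ and $\div\bm{p} = w - ku$, the estimate reduces (after absorbing powers of $k \ge k_0$) to
\begin{displaymath}
  \|u\|_{L^2(\Omega)} + \|\bm{p}\|_{L^2(\Omega)} \le C\mc{R} \qquad\text{and}\qquad \|\nabla u\|_{L^2(\Omega)} \le Ck\mc{R}.
\end{displaymath}
If $\un \cdot \bm{p} \notin L^2(\Gamma_R)$ the right-hand side of \eqref{eq_HelmholtzInequality} is infinite and there is nothing to prove, so we may assume $\tau \in L^2(\Gamma_R)$.

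Next I would eliminate $\bm{p}$. A Green's formula applied to $\bm{p} \in H(\div,\Omega)$ shows that $u$ is a weak solution of a Helmholtz problem with homogeneous Dirichlet datum on $\Gamma_D$, namely for every $v \in H_D^1(\Omega)$,
\begin{displaymath}
  \int_\Omega \nabla u\cdot\overline{\nabla v}\,\d{x} - k^2\int_\Omega u\,\overline{v}\,\d{x} + \ui k\int_{\Gamma_R} u\,\overline{v}\,\d{s} = -k\int_\Omega w\,\overline{v}\,\d{x} + k\int_{\Gamma_R} \tau\,\overline{v}\,\d{s} + \int_\Omega \bm{r}\cdot\overline{\nabla v}\,\d{x} .
\end{displaymath}
I would then split $u = u_1 + u_2$, where $u_1 \in H_D^1(\Omega)$ is the genuine solution of \eqref{eq_H3} with data $f = -kw \in L^2(\Omega)$, $g_0 = 0$ and $g = k\tau \in L^2(\Gamma_R)$; this exists by Theorem \ref{th_stability}, which also gives $k\|u_1\|_{L^2(\Omega)} + \|\nabla u_1\|_{L^2(\Omega)} \le Ck(\|w\|_{L^2(\Omega)} + \|\tau\|_{L^2(\Gamma_R)})$. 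Subtracting the weak form of $u_1$ from the identity above, $u_2 := u - u_1 \in H_D^1(\Omega)$ satisfies the same variational identity with the whole right-hand side replaced by $\int_\Omega \bm{r}\cdot\overline{\nabla v}\,\d{x}$.

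Finally, $u_2$ is controlled by duality. Let $z \in H_D^1(\Omega)$ solve the adjoint problem $-\Delta z - k^2 z = u_2$ in $\Omega$, $z = 0$ on $\Gamma_D$, $\npar{z} - \ui k z = 0$ on $\Gamma_R$; applying Theorem \ref{th_stability} to the conjugate $\overline{z}$ (which solves \eqref{eq_H3} with datum $\overline{u_2}$ and vanishing boundary data) gives the existence of $z$ and the bound $k\|z\|_{L^2(\Omega)} + \|\nabla z\|_{L^2(\Omega)} \le C\|u_2\|_{L^2(\Omega)}$. Testing the $u_2$-identity with $v = z$, testing the weak form of the $z$-problem with $u_2$, and comparing the two, the bulk and boundary Helmholtz terms cancel and one is left with $\|u_2\|_{L^2(\Omega)}^2 = \int_\Omega \bm{r}\cdot\overline{\nabla z}\,\d{x}$, hence $\|u_2\|_{L^2(\Omega)} \le \|\bm{r}\|_{L^2(\Omega)}\|\nabla z\|_{L^2(\Omega)}/\|u_2\|_{L^2(\Omega)} \le C\|\bm{r}\|_{L^2(\Omega)}$. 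For the gradient I would test the $u_2$-identity with $v = u_2$ and take real parts, obtaining $\|\nabla u_2\|_{L^2(\Omega)}^2 = k^2\|u_2\|_{L^2(\Omega)}^2 + \mathrm{Re}\int_\Omega \bm{r}\cdot\overline{\nabla u_2}\,\d{x}$, which together with Young's inequality and the bound on $\|u_2\|_{L^2(\Omega)}$ gives $\|\nabla u_2\|_{L^2(\Omega)} \le Ck\|\bm{r}\|_{L^2(\Omega)}$. Combining the estimates for $u_1$ and $u_2$ yields $\|u\|_{L^2(\Omega)} \le C\mc{R}$ and $\|\nabla u\|_{L^2(\Omega)} \le Ck\mc{R}$, and then $k\|\bm{p}\|_{L^2(\Omega)} \le \|\nabla u\|_{L^2(\Omega)} + \|\bm{r}\|_{L^2(\Omega)} \le Ck\mc{R}$ and $\|\div\bm{p}\|_{L^2(\Omega)} \le \|w\|_{L^2(\Omega)} + k\|u\|_{L^2(\Omega)} \le Ck\mc{R}$, which assemble to \eqref{eq_HelmholtzInequality}. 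The main obstacle, and the reason for the splitting, is that $\bm{r}$ lies only in $L^2(\Omega)^d$, so $-\div\bm{r}$ and $\un \cdot \bm{r}$ are not $L^2$ data and Theorem \ref{th_stability} cannot be applied to $u$ directly; the trace-free identity $\|u_2\|_{L^2(\Omega)}^2 = \int_\Omega \bm{r}\cdot\overline{\nabla z}\,\d{x}$ is precisely what trades this rough right-hand side for $\|\bm{r}\|_{L^2(\Omega)}$ alone, and one must keep every power of $k$ explicit throughout.
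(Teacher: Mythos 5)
Your proof is correct (up to the same tacit identification of $\un \cdot \bm{p}|_{\Gamma_R}$ with an $L^2(\Gamma_R)$ function in the integration by parts that the paper itself makes), the powers of $k$ assemble to exactly the stated bound, and the route is genuinely different from the paper's. The paper keeps $u$ whole: starting from the same variational identity, it tests with the single function $v=u+\xi$, where $\xi\in H_D^1(\Omega)$ solves the adjoint problem $a(\xi,\phi)=2k^2(u,\phi)_{L^2(\Omega)}$, so that $\mathrm{Re}\,a(u,u+\xi)$ yields $\|\nabla u\|_{L^2(\Omega)}^2+k^2\|u\|_{L^2(\Omega)}^2$; Theorem \ref{th_stability} enters once, to bound $\xi$, and the price is that the Robin traces $\|u\|_{L^2(\Gamma_R)}$ and $\|\xi\|_{L^2(\Gamma_R)}$ survive and must be controlled separately by a trace inequality and an imaginary-part identity, namely \eqref{eq_xitrace}--\eqref{eq_utrace}. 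You instead split $u=u_1+u_2$: the $L^2$ data $(-kw,\,k\tau)$ are absorbed into $u_1$, which Theorem \ref{th_stability} bounds directly, while the leftover divergence-form functional $\int_\Omega \bm{r}\cdot\overline{\nabla v}\,\d{x}$ is handled by an Aubin--Nitsche duality for $\|u_2\|_{L^2(\Omega)}$ (adjoint problem with source $u_2$ and homogeneous Robin datum, obtained from Theorem \ref{th_stability} by conjugation) followed by a G\aa rding-type identity for $\|\nabla u_2\|_{L^2(\Omega)}$. Because the adjoint datum vanishes on $\Gamma_R$, the boundary terms in your duality step cancel exactly, so you never need the paper's boundary-trace estimates; the cost is invoking the stability theorem twice (for $u_1$ and for $z$) rather than once. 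Both arguments hinge on the same key ingredient --- the $k$-explicit stability of Theorem \ref{th_stability} applied to an adjoint problem --- and both deliver the same $O(k)$ factor in \eqref{eq_HelmholtzInequality}; the only minor slip is that your stated reduction omits the component $\|\nabla\cdot\bm{p}\|_{L^2(\Omega)}\le Ck\mc{R}$ of $\pnorm{\bm{p}}$, which you do in fact supply at the end via $\|\nabla\cdot\bm{p}\|_{L^2(\Omega)}\le\|w\|_{L^2(\Omega)}+k\|u\|_{L^2(\Omega)}$.
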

\begin{proof}
  For any $u \in H_D^1(\Omega)$ and $\bm{p} \in H(\div, \Omega)$, 
  we define
  \begin{displaymath}
    \begin{aligned}
      f_1 := &-\nabla \cdot \bm{p} - ku, \quad 
      \bm{f}_2 := \nabla u - k\bm{p}, \quad \text{ in } \Omega, \\
      &g := \un \cdot \bm{p} + \ui u, \quad \text{ on } \Gamma_R.
    \end{aligned}
  \end{displaymath}
  and let 
  \begin{displaymath}
    a(u,v) := (\nabla u, \nabla v)_{L^2(\Omega)} -
    k^2(u,v)_{L^2(\Omega)} + \ui k(u,v)_{L^2(\Gamma_R)}, \quad \forall
    v \in H^1_D(\Omega).
  \end{displaymath}
  Using the integration by parts, we obtain that
  \begin{equation*}
    a(u,v) = k(f_1, v)_{L^2(\Omega)} + (\bm{f}_2, \nabla
    v)_{L^2(\Omega)} + k(g,v)_{L^2(\Gamma_R)}, \quad \forall
    v \in H^1_D(\Omega).
  \end{equation*}
  We take $v = u + \xi$, where $\xi \in H_D^1(\Omega)$ is the unique
  solution of the adjoint problem:
  \begin{equation}
    a(\xi, \phi) = 2k^2(u, \phi)_{L^2(\Omega)}, \quad \forall \phi \in
    H_D^1(\Omega).
    \label{eq_adjoint}
  \end{equation}
  Then by the stability result \eqref{eq_stability}, we have that
  \begin{equation}
    \| \nabla \xi \|_{L^2(\Omega)} + k \| \xi \|_{L^2(\Omega)} \leq C
    k^2 \| u \|_{L^2(\Omega)}.
    \label{eq_xistability}
  \end{equation}
  From \eqref{eq_adjoint} and \eqref{eq_xistability}, we get that
  \begin{displaymath}
    \begin{aligned}
      \text{Re}(a(u,&u+\xi)) = \| \nabla u \|_{L^2(\Omega)}^2 + k^2 \|
      u \|_{L^2(\Omega)}^2  \vspace{1ex}\\
      &\leq k \| f_1 \|_{L^2(\Omega)} (\| u \|_{L^2(\Omega)} + \| \xi
      \|_{L^2(\Omega)}) + \| \bm{f}_2 \|_{L^2(\Omega)} ( \| \nabla u
      \|_{L^2(\Omega)} + \| \nabla \xi \|_{L^2(\Omega)})
      \vspace{1ex}\\
      &\,\, + k \| g \|_{L^2(\Gamma_R)}(\| u \|_{L^2(\Gamma_R)} + \| \xi
      \|_{L^2(\Gamma_R)}) \vspace{1ex} \\
      & \leq C k (\| f_1 \|_{L^2(\Omega)} + \| \bm{f}_2
      \|_{L^2(\Omega)}) ( \| \nabla u \|_{L^2(\Omega)} + k \| u
      \|_{L^2(\Omega)}) \vspace{1ex}\\ 
      & \,\, + k \| g \|_{L^2(\Gamma_R)} ( \| u \|_{L^2(\Gamma_R)} + 
      \| \xi \|_{L^2(\Gamma_R)}).
    \end{aligned}
  \end{displaymath}
  The rest is to bound the boundary terms $\| u \|_{L^2(\Gamma_R)}$
  and $\| \xi \|_{L^2(\Gamma_R)}$. Taking $\phi = \xi$ in
  \eqref{eq_adjoint} gives us that
  \begin{displaymath}
    \begin{aligned}
      \text{Im}(a(\xi, &\xi)) = k \| \xi \|_{L^2(\Gamma_R)}^2 \leq 2k^2 \|
      \xi \|_{L^2(\Omega)} \| u \|_{L^2(\Omega)} \vspace{1ex}\\
      &\leq k(\| \xi \|_{L^2(\Omega)}^2 + k^2 \| u \|_{L^2(\Omega)}^2),
    \end{aligned}
  \end{displaymath}
  which implies
  \begin{equation}
    \| \xi \|_{L^2(\Gamma_R)} \leq C(\| \xi \|_{L^2(\Omega)} + k \| u
    \|_{L^2(\Omega)}).
    \label{eq_xitrace}
  \end{equation}
  The term $\| u \|_{L^2(\Gamma_R)}$ is bounded by the trace theorem
  \begin{displaymath}
    \begin{aligned}
      \| u \|_{L^2(\Gamma_R)}^2 &\leq C \| u \|_{L^2(\Omega)} \| u
      \|_{H^1(\Omega)} \vspace{1ex} \\
      &\leq C(\frac{k^2}{2} \| u \|_{L^2(\Omega)}^2 + \frac{1}{2k^2} \|
      u \|_{H^1(\Omega)}^2),
    \end{aligned}
  \end{displaymath}
  which implies
  \begin{equation}
    \| u \|_{L^2(\Gamma_R)}^2 \leq C(k\| u \|_{L^2(\Omega)} + \| \nabla
    u \|_{L^2(\Omega)}).
    \label{eq_utrace}
  \end{equation}
  Combing \eqref{eq_xitrace} and \eqref{eq_utrace}, we get
  \begin{displaymath}
    k \| u \|_{L^2(\Omega)} + \| \nabla u \|_{L^2(\Omega)} \leq C k (\|
    f_1 \|_{L^2(\Omega)} + \| \bm{f}_2 \|_{L^2(\Omega)} + \| g
    \|_{L^2(\Gamma_R)}).
  \end{displaymath}
  Further,
  \begin{displaymath}
    \begin{aligned}
      \pnorm{\bm{p}} &\leq C( k \| \bm{p} \|_{L^2(\Omega)} + \| \nabla
      \cdot \bm{p} \|_{L^2(\Omega)}) \vspace{1ex}\\
      &\leq C( \| \nabla u \|_{L^2(\Omega)} + \| \bm{f}_2
      \|_{L^2(\Omega)} + k \| u \|_{L^2(\Omega)} + \| f_1
      \|_{L^2(\Omega)} ) \vspace{1ex}\\
      & \leq Ck (\| f_1 \|_{L^2(\Omega)} + \| \bm{f}_2 \|_{L^2(\Omega)}
      + \| g \|_{L^2(\Gamma_R)}),
    \end{aligned}
  \end{displaymath}
  which gives us the estimate \eqref{eq_HelmholtzInequality} and
  completes the proof.
\end{proof}

We state the following lemmas, together with Lemma
\ref{le_HelmholtzInequality}, to prove the coercivity to the bilinear
form $a_h(\cdot; \cdot)$.
\begin{lemma}
  For any ${u}_h \in \bmr{V}_h^m$, there exists a piecewise polynomial
  function $v_h \in H^1_D(\Omega)$ such that
  \begin{equation}
    \begin{aligned}
      \sum_{K \in \MTh} \left( h_K^{-2} \|u_h - v_h \|_{L^2(K)}^2 + 
      \|\nabla (u_h - v_h) \|_{L^2(K)}^2 \right) \leq C \sum_{e \in 
      \MEh^i \cup \MEh^D} h_e^{-1} \| \jump{u_h}\|_{L^2(e)}^2.
    \end{aligned}
    \label{eq_projection1}
  \end{equation} 
  \label{le_projection1}
\end{lemma}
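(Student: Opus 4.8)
The plan is to build $v_h$ by a nodal averaging (Oswald / Karakashian--Pascal type) operator and then estimate the conforming--nonconforming discrepancy node by node. Fix the Lagrange nodal basis $\{\varphi_\nu\}$ of the $H^1$-conforming piecewise polynomial space of degree $m$ on $\MTh$; let $\mathcal{N}$ be the set of Lagrange nodes, and for $\nu \in \mathcal{N}$ let $\omega_\nu$ be the set of elements containing $\nu$. Define
\[
  v_h := \sum_{\nu \in \mathcal{N}} c_\nu \varphi_\nu, \qquad
  c_\nu := \begin{cases} 0, & \nu \in \Gamma_D, \\[2pt] \dfrac{1}{|\omega_\nu|} \sum_{K \in \omega_\nu} u_h|_K(\nu), & \text{otherwise.} \end{cases}
\]
By construction $v_h$ is a piecewise polynomial lying in $H^1_D(\Omega)$, so it is an admissible competitor.

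Next I would localize. For a fixed $K \in \MTh$ with nodes $\nu_1, \dots, \nu_N$, $N = \dim \mathbb{P}_m$, the polynomial $u_h|_K - v_h|_K \in \mathbb{P}_m(K)$ is determined by its nodal values $u_h|_K(\nu_j) - c_{\nu_j}$; a scaling to the reference element together with the equivalence of all norms on the finite-dimensional space $\mathbb{P}_m$ gives
\[
  h_K^{-2} \| u_h - v_h \|_{L^2(K)}^2 + \| \nabla (u_h - v_h) \|_{L^2(K)}^2 \leq C\, h_K^{d-2} \sum_{j=1}^N \big| u_h|_K(\nu_j) - c_{\nu_j} \big|^2 .
\]
It then remains to control each nodal discrepancy $|u_h|_K(\nu) - c_\nu|$ by jump contributions. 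If $\nu$ is an interior (or $\Gamma_R$) node, $u_h|_K(\nu) - c_\nu$ is an average over $K' \in \omega_\nu$ of the differences $u_h|_K(\nu) - u_h|_{K'}(\nu)$, each of which telescopes along a chain of elements inside the patch $\omega_\nu$ joined through shared interior faces; by shape regularity both $|\omega_\nu|$ and the chain lengths are bounded by a constant depending only on the shape-regularity constant and $d$, so $|u_h|_K(\nu) - c_\nu| \leq C \sum_e |\jump{u_h}(\nu)|$ over interior faces $e$ in $\omega_\nu$. If $\nu \in \Gamma_D$, then $c_\nu = 0$ and, after telescoping through $\omega_\nu$ to an element possessing a face on $\Gamma_D$, one gets $|u_h|_K(\nu)| \leq C\big(\sum_e |\jump{u_h}(\nu)| + \sum_{e \subset \Gamma_D} |u_h(\nu)|\big)$; since $\jump{u_h} = u_h \un$ on $\Gamma_D$, these are again terms of the form $|\jump{u_h}(\nu)|$ with $e \in \MEh^D$.

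Finally, for a face $e$ (interior or on $\Gamma_D$) the polynomial $\jump{u_h}|_e$ satisfies, by a reference-face norm-equivalence estimate, $\sum_{\nu \in e} |\jump{u_h}(\nu)|^2 \leq C\, h_e^{1-d} \| \jump{u_h} \|_{L^2(e)}^2$. Substituting the nodal bounds into the element estimate, squaring, summing over $K \in \MTh$, and using shape regularity both to replace $h_K$ by $h_e$ for $e \subset \partial K$ and to guarantee that each face $e$ appears in only a bounded number of element-level bounds (it enters only through elements lying in a bounded neighborhood of $e$), yields
\[
  \sum_{K \in \MTh} \Big( h_K^{-2} \| u_h - v_h \|_{L^2(K)}^2 + \| \nabla (u_h - v_h) \|_{L^2(K)}^2 \Big) \leq C \sum_{e \in \MEh^i \cup \MEh^D} h_e^{-1} \| \jump{u_h} \|_{L^2(e)}^2 ,
\]
which is \eqref{eq_projection1}. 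The hard part will be the combinatorial bookkeeping of this last step: verifying that the telescoping chains in each nodal patch (and, for $\Gamma_D$-nodes, a chain reaching $\Gamma_D$) have uniformly bounded length, that every interior face is charged to only finitely many elements, and that all the reference-element scalings produce exactly the stated powers of $h$; everything here rests on the shape-regularity of $\MTh$.
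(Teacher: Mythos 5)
Your proposal is correct and is essentially the paper's own argument: the same Karakashian–Pascal/Oswald nodal-averaging construction of $v_h$ (zero coefficients at $\Gamma_D$-nodes, patch averages elsewhere), followed by element-level scaling to nodal discrepancies and conversion of nodal jump values into $h_e^{-1}\|\jump{u_h}\|_{L^2(e)}^2$ via face-wise norm equivalence, with shape regularity controlling patch sizes. The only cosmetic difference is that the paper passes through $\|\jump{u_h}\|_{L^\infty(e)}$ and an inverse inequality where you use the nodal-value/$L^2$ equivalence on the face directly, and you are somewhat more explicit about the telescoping-chain bookkeeping that the paper leaves implicit.
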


\begin{proof}
  The proof follows from the techniques as in
  \cite{Karakashian2003post}. For each $K \in \MTh$, let
  $\mc{N}_K = \left\{ \bm{x}_K^{(i)}, \,\, i = 1, \cdots, M \right\}$ be 
  the Lagrange points of $K$ and $\left\{ \varphi_K^{(i)}, \,\, i = 1
  ,\cdots, M \right\}$ be the corresponding Lagrange basis, where $M$
  is the number of degrees of freedom for the Lagrange element of
  order $m$. We set $\mc{N} := \cup_{K \in \MTh} \mc{N}_K$ and
  \begin{equation*}
    \begin{aligned}
      \mc{N}_i &:= \left\{ \nu \in \mc{N} : \exists K \in \MTh, \,\, 
      \nu \text{ is interior to } K \right\}, \\
      \mc{N}_b & := \left\{ \nu \in \mc{N} : \nu \text{ lies on } \Gamma_D
      \right\}, \\
      \mc{N}_{e} & := \mc{N} \backslash (\mc{N}_i \cup \mc{N}_b).
    \end{aligned}
  \end{equation*}
  Let $\omega_{\nu} = \left\{ K \in \MTh | \,\, \nu \in K \right\}$
  and denote its cardinality by $| \omega_{\nu} |$. Since the mesh is
  shape-regular, $|\omega_{\nu}|$ is bounded by a constant.
  For any given $u_h \in \bmr{V}_h^m$, there exists a group of
  coefficients $\{a_K^{(j)}\}$ such that 
  \begin{equation*}
    u_h = \sum_{K \in \MTh} \sum_{1 \leq j \leq M} \alpha_K^{(j)}
    \varphi_K^{(j)}.
  \end{equation*}
  To each node $\nu \in \mc{N}$, we associate the basis function
  $\varphi^{(\nu)}$ given by
  \begin{displaymath}
    \varphi^{(\nu)} |_{K} := \left\{
    \begin{aligned}
      &\varphi_K^{(j)}, && \text{if } \bm{x}_K^{(j)} = \nu, \\
      &0, && \text{otherwise}. \\
    \end{aligned}
    \right.
  \end{displaymath}
  We define $v_h \in \bmr{V}_h^m \cap H^1_D(\Omega)$ by
  \begin{equation*}
    v_h = \sum_{\nu \in \mc{N}} \beta^{(\nu)} \varphi^{(\nu)}.
  \end{equation*}
  where
  \begin{displaymath}
    \beta^{(\nu)} := \left\{\begin{aligned}
      &0, && \text{if } \nu \in \mc{N}_b, \\
      &\frac{1}{|\omega_{\nu}|} \sum_{\bm{x}_K^{(j)} = \nu}
      \alpha_K^{(j)}, && \text{if } \nu \in \mc{N} \backslash \mc{N}_b.
      \\
    \end{aligned} \right.
  \end{displaymath}
  Let $\beta_K^{(j)} = \beta^{(\nu)}$ whenever $\bm{x}_K^{(j)} =
  \nu$. By scaling argument, we have that
  \begin{equation*}
    \| \nabla \varphi_K^{(j)} \|_{L^2(K)}^2 \leq C h_K^{d-2}, \qquad   
    \|\varphi_K^{(j)} \|^2_{L^2(K)} \leq C h_K^d.
  \end{equation*}
  Hence,
  \begin{equation*}
    \begin{aligned}
      \sum_{K \in \MTh} \| \nabla(u_h &- v_h) \|_{L^2(K)}^2 \leq C
      \sum_{K \in \MTh} h_K^{d-2} \sum_{j = 1}^{M} | \alpha_K^{(j)} -
      \beta_K^{(j)} |^2 \\
      &\leq C
      \sum_{\nu \in \mc{N}_{e}} h_{\nu}^{d-2} \sum_{\bm{x}_K^{(j)} = \nu}
      |\alpha_K^{(j)} - \beta^{(\nu)}|^2
      + C \sum_{\nu \in \mc{N}_b} h_{\nu}^{d-2} \sum_{\bm{x}_K^{(j)} = \nu} |
      \alpha_K^{(j)} |^2 \\
      &\leq C \sum_{e \in \MEh^i} h_e^{ d-2 } \sum_{\nu \in e} |
      \alpha_{K^+}^{(j_{\nu}^{+})} - \alpha_{K^-}^{(j_{\nu}^{-})} |^2
      + C \sum_{e \in \MEh^D} h_e^{ d-2 } \sum_{\nu \in e} |
      \alpha_K^{(j_{\nu})} |^2,
    \end{aligned}
  \end{equation*}
  with $h_{\nu} = \max\limits_{K \in \omega_{\nu}} h_K$ and
  $\bm{x}_{K^+}^{(j_{\nu}^{+})} = \bm{x}_{K^-}^{(j_{\nu}^{-})} = \nu$.
  Note that $ |\alpha_{K^+}^{(j_{\nu}^{+})} -
  \alpha_{K^-}^{(j_{\nu}^{-}) }| \leq C\|\jump{u_h} \|_{L^\infty(e)} $,
  together with the inverse inequality, we have
  \begin{equation*}
    \sum_{K \in \MTh} \| \nabla (u_h - v_h) \|_{L^2(K)}^2 \leq C
    \sum_{e \in \MEh^D \cup \MEh^i} h_e^{d-2} \|\jump{u_h}
    \|_{L^{\infty}(e)}^2 \leq C \sum_{e \in \MEh^D \cup \MEh^i}
    h_e^{-1} \| \jump{u_h} \|_{L^2(e)}^2.
  \end{equation*}
  Similarly,
  \begin{equation*}
    \sum_{K \in \MTh} h_K^{-2} \| u_h - v_h \|_{L^2(K)}^2 \leq C
    \sum_{e \in \MEh^i \cup \MEh^D} h_e^{-1} \| \jump{u_h}
    \|_{L^2(e)}^2,
  \end{equation*}
  which deduces to \eqref{eq_projection1} and completes the proof.
\end{proof}

\begin{lemma}
  For any $\bm{p}_h \in \bmr{\Sigma}_h^m$, there exists a piecewise
  polynomial function $\bm{w}_h \in H(\div, \Omega)$ such that 
  \begin{equation}
    \begin{aligned}
      \sum_{K \in \MTh} \left( h_K^{-2} \|\bm{p}_h- \bm{w}_h 
      \|_{L^2(K)}^2 + \|\nabla \cdot ( \bm{p}_h- \bm{w}_h ) 
      \|_{L^2(K)}^2 \right) \leq C \sum_{e \in \MEh^i} h_e^{-1} \|
      \jump{\un \cdot \bm{p}_h }\|_{L^2(e)}^2.
    \end{aligned}
    \label{eq_Hdivprojection}
  \end{equation}
  \label{le_Hdivprojection}
\end{lemma}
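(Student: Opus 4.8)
The plan is to follow the strategy of the proof of Lemma~\ref{le_projection1}, with the scalar Lagrange element replaced by an $H(\div,\Omega)$-conforming finite element whose element-wise space is exactly $(\mathbb{P}_m(K))^d$, namely the Brezzi--Douglas--Marini element $\mathbf{BDM}_m$ (the Raviart--Thomas element $\RT_m$, which contains $(\mathbb{P}_m(K))^d$, would serve equally well). First I would recall that the degrees of freedom of $\mathbf{BDM}_m$ on a simplex $K$ split into \emph{face} moments $\bm{v} \mapsto \int_e (\un \cdot \bm{v})\, q \, \d{s}$, with $q$ running over a basis of $\mathbb{P}_m(e)$ and $e$ a face of $K$, and \emph{interior} moments supported in the interior of $K$; the face moments on $e$ determine the normal trace $\un \cdot \bm{v}|_e \in \mathbb{P}_m(e)$ completely, so that after global assembly a function lies in $H(\div,\Omega)$ precisely when its face moments agree from the two sides of every interior face. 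Since $\bm{p}_h|_K \in (\mathbb{P}_m(K))^d = \mathbf{BDM}_m(K)$, I can expand $\bm{p}_h|_K = \sum_l \gamma_K^{(l)} \bm{\psi}_K^{(l)}$ in the basis $\{ \bm{\psi}_K^{(l)} \}$ dual to these local degrees of freedom, where $\gamma_K^{(l)}$ are the corresponding moments of $\bm{p}_h|_K$; this plays the role of the nodal expansion used in Lemma~\ref{le_projection1}.

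Next I would define $\bm{w}_h$ in the same spirit as $v_h$ was defined there: leave the element-interior moments, and the face moments on boundary faces $e \in \MEh^D \cup \MEh^R$ (which carry no penalty in $\pnorm{\cdot}$ and no conformity constraint), untouched, and on each interior face $e = \partial K^+ \cap \partial K^-$ replace the two copies of each face moment of $\bm{p}_h$ by their orientation-corrected average, i.e.\ by $\tfrac{1}{2}(\gamma_{K^+}^{(e,j)} - \gamma_{K^-}^{(e,j)})$ up to the sign coming from $\un^- = -\un^+$. Then $\bm{w}_h$ is a piecewise polynomial of degree $m$, its normal traces agree across every interior face, so $\bm{w}_h \in H(\div, \Omega)$; and on each element $\bm{p}_h - \bm{w}_h$ is a combination of only those dual basis functions $\bm{\psi}_K^{(e,j)}$ attached to interior faces $e \subset \partial K$, with the coefficient of $\bm{\psi}_K^{(e,j)}$ equal (after a short computation) to $\tfrac{1}{2}\int_e \jump{\un \cdot \bm{p}_h}\, q_e^{(j)} \, \d{s}$, which by the Cauchy--Schwarz inequality and a scaling estimate is bounded by $C h_e^{(d-1)/2}\,\| \jump{\un \cdot \bm{p}_h} \|_{L^2(e)}$. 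Then a scaling argument on a shape-regular simplex -- the $H(\div)$ counterpart of the estimates $\| \nabla \varphi_K^{(j)} \|_{L^2(K)}^2 \leq C h_K^{d-2}$ and $\| \varphi_K^{(j)} \|_{L^2(K)}^2 \leq C h_K^{d}$ used above -- gives $\| \bm{\psi}_K^{(e,j)} \|_{L^2(K)}^2 \leq C h_K^{2-d}$ and $\| \nabla \cdot \bm{\psi}_K^{(e,j)} \|_{L^2(K)}^2 \leq C h_K^{-d}$; inserting the coefficient bounds, squaring, and summing over the boundedly many faces of each $K$ and then over $\MTh$ -- using $h_K \simeq h_e$ for $e \subset \partial K$ and that every interior face is shared by exactly two elements -- the powers of $h$ combine to yield \eqref{eq_Hdivprojection} for both the $L^2$ and the divergence contributions.

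I do not expect any single step to present a serious obstacle; the lemma is the $H(\div)$ analogue of Lemma~\ref{le_projection1} and its proof runs in parallel. The part that requires the most care is the twofold choice forced by the $H(\div)$ setting: taking a conforming element whose local space is precisely $(\mathbb{P}_m(K))^d$ so that $\bm{p}_h$ possesses well-defined moments, and averaging the \emph{normal-component} face moments with the correct orientation of the face normals so that the assembled normal trace is single-valued and $\bm{w}_h \in H(\div,\Omega)$; the element-interior and boundary-face moments, being single-valued, then drop out of $\bm{p}_h - \bm{w}_h$ automatically, just as the interior nodes $\mc{N}_i$ did in Lemma~\ref{le_projection1}. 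After that, the coefficient estimate in terms of $\jump{\un \cdot \bm{p}_h}$ and the scaling of the dual basis functions -- including checking that the exponents of $h$ combine to the $h_e^{-1}$ weight on the right-hand side of \eqref{eq_Hdivprojection} -- are routine, and I would present them briefly, referring to \cite{Karakashian2003post} for the technique just as in Lemma~\ref{le_projection1}.
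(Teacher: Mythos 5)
Your proposal is correct and takes essentially the same route as the paper: there, too, $\bm{w}_h$ is built by keeping the interior and boundary-face moments of an $H(\div,\Omega)$-conforming element and replacing the normal-trace moments on each interior face by their average, with the final bound obtained from Piola/scaling (norm-equivalence) estimates on the reference element. The only difference is that the paper embeds $(\mb{P}_m(K))^d$ into $\RT_m(K)$ instead of using $\mathbf{BDM}_m$, which is immaterial for the lemma (your choice even keeps $\bm{w}_h$ in $\bmr{\Sigma}_h^m$).
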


\begin{proof}
  We prove the result by using the projection techniques as in
  \cite{Karakashian2003post, Li2020discontinuous}. We will construct a
  new piecewise polynomial function in the \RT space that satisfies 
  the estimate \eqref{eq_Hdivprojection}. We first present some 
  details about the Raviart-Thomas (\RT\!) element, which is the 
  well-known $H(\div,\Omega)$-conforming element proposed in 
  \cite{Raviart1977mixed}. For a bounded domain $D$, we denote by 
  $\wt{\mb{P}}_k(D)$ the set of homogeneous polynomials of degree $k$ 
  on $D$. For the element $K \in \MTh$, the \RT element $\RT_k(K)$ of 
  degree $k$ is given as
  \begin{displaymath}
    \RT_k(K) := \mb{P}_k(K)^d + \bm{x}\wt{\mb{P}}_k(K).
  \end{displaymath}
  For a face $e$, we denote by $\left\{ \bm{q}_e^i \right\}_{i
  =1}^{N_e}$ a basis of the polynomial space $\mb{P}_k(e)$, and for an
  element $K$, we deonte by $\left\{ \bm{q}_K^i \right\}_{i=1}^{N_b}$
  a basis of the polynomial space $\mb{P}_{k - 1}(K)$. For a vector
  field $\bm{v} \in \RT_k(K)$, the moments associated with
  faces of $K$ and $K$ itself are defined as
  \begin{equation}
    \begin{aligned}
      M_K^e(\bm{v}) & := \left\{ \int_e (\un_e \cdot \bm{v})
      \bm{q}_e^i \d{s} \right\}, \quad \text{for any face } e \in
      \mc{E}(K), \\
      M_K^b(\bm{v}) & := \left\{ \int_K \bm{v} \cdot \bm{q}_K^i \d{x}
      \right\},\\
    \end{aligned}
    \label{eq_RTmoments}
  \end{equation}
  where $\mc{E}(K)$ denotes the set of faces of the element $K$.
  The polynomials in $\RT_k(K)$ can be uniquely determined by the
  moments given in \eqref{eq_RTmoments}\cite{Raviart1977mixed}. For
  any $\bm{q} \in \RT_k(K)$, we define $q_{K, e}^i \in M_K^e(\bm{q})(1
  \leq i \leq N_e)$, $q_{K,b}^i \in M_K^b(\bm{q})(1 \leq i \leq N_b)$
  as its corresponding moments, respectively.  We denote by $\left\{
  \bm{\phi}_{K, e}^i \right\}(1 \leq i \leq N_e)$ and $\left\{
  \bm{\phi}_{K,b}^i \right\}(1 \leq i \leq N_b)$ the basis functions
  of $\RT_k(K)$ with respect to the moments $M_K^e(\cdot)$ and
  $M_K^b(\cdot)$, respectively. From $\left\{ \bm{\phi}_{K, e}^i
  \right\}$ and $\left\{ \bm{\phi}_{K,b}^i \right\}$ and the moments
  in \eqref{eq_RTmoments}, any polynomial $\bm{q} \in \RT_k(K)$ can be
  expressed as
  \begin{displaymath}
    \bm{q} = \sum_{e \in \mc{E}(K)} \sum_{i = 1}^{N_e} q_{K, e}^i
    \bm{\phi}_{K, e}^i + \sum_{i = 1}^{N_b} q_{K, b}^i \bm{\phi}_{K,
    b}^i.
  \end{displaymath}
  Then we will take advantage of the affine equivalence of elements.
  Let $\wh{K}$ be the reference simplex of $d$ dimensions and we
  employ the Piola transformation that maps a vector $\wh{\bm{v}}:
  \wh{K} \rightarrow \mb{R}^d$ to a vector $\bm{v}: K \rightarrow
  \mb{R}^d$. The Piola transformation preserves the moments and we
  refer to \cite{Raviart1977mixed, Brezzi1991mixed} for detailed
  properties of the Piola transformation. Then we have that
  \begin{equation}
    h_K^{-2}\|{\bm{q}} \|_{L^2({K})}^2 + \|\nabla \cdot {\bm{q}}
    \|_{L^2({K})}^2 \leq C   h_K^{-d}  \left( \sum_{e \in \mc{E}(K)}
    \sum_{i = 1}^{N_e} (q_{K, e}^i)^2 + \sum_{i = 1}^{N_b} (q_{K,
    b}^i)^2 \right).
    \label{eq_Kdivmoments}
  \end{equation}
  It is clear that \eqref{eq_Kdivmoments} holds on the reference
  element. On a general element $K$, we obtain the estimate
  \eqref{eq_Kdivmoments} from the properties of the Piola
  transformation, $\|\bm{q}\|_{L^2(K)}^2 \leq C h_K^{-d + 2}
  \|\wh{\bm{q}} \|_{L^2(\wt{K})}^2$ and  $\| \nabla \cdot
  \bm{q}\|_{L^2(K)}^2 \leq Ch_K^{-d} \|\wh{\nabla} \cdot \wh{\bm{q}}
  \|_{L^2(\wt{K})}^2$. We let $e \in \MEh^i$ be an interior face
  shared by two adjacent elements $K_1$ and $K_2$. For two polynomials
  $\bm{q}_1 \in \RT_k(K_1)$ and $\bm{q}_2 \in \RT_k(K_2)$, we state
  that there exists a constant $C$ such that 
  \begin{equation}
    \sum_{i = 1}^{N_e} (q_{K_1, e}^i - q_{K_2, e}^i)^2 \leq C h_e^{d -
    1} \int_e (\un \cdot(\bm{q}_1 - \bm{q}_2))^2 \d{s}.
    \label{eq_q1q2f}
  \end{equation}
  We also apply the scaling argument to obtain \eqref{eq_q1q2f}. We
  first assume that both $K_1$ and $K_2$ are of the reference size.
  We note that the left hand side of \eqref{eq_q1q2f} vanishes implies
  that the right hand side of \eqref{eq_q1q2f} also equals to zero and
  vice verse. The estimate \eqref{eq_q1q2f} holds due to the
  equivalence of norms over finite dimensional spaces. For general
  cases, we can obtain \eqref{eq_q1q2f} from the scaling estimate $\|
  \bm{\wh{q}} \|_{L^2(e)}^2 \leq C h_K^{d - 1} \| \bm{q}
  \|_{L^2(e)}^2$. 

  Now we are ready to prove Lemma \ref{le_Hdivprojection} by
  constructing a new piecewise polynomial $\bm{w}_h \in H(\div,
  \Omega)$ satisfying \eqref{eq_Hdivprojection}. Clearly,
  $\mb{P}_k(K)^d \subset \RT_k(K)$ for any $K \in \MTh$ and we let $\{
  p_{K, e}^i \}$ and $\{ p_{K, b}^i\}$ be the moments of $\bm{p}_h$
  for any $K \in \MTh$ and any $e \in \mc{E}(K)$. We construct
  $\bm{w}_h$ by defining the following moments on faces and elements:
  \begin{equation}
    w_{K, e}^i := \frac{1}{|N(e)|} \sum_{K' \in N(e)} p_{K', e}^i,
    \quad 1 \leq i \leq N_e, \quad \forall e \in \MEh, 
    \label{eq_wKfi}
  \end{equation}
  and 
  \begin{equation}
    w_{K, b}^i = p_{K, b}^i, \quad 1 \leq i \leq N_b, \quad \forall K
    \in \MTh,
    \label{eq_wKbi}
  \end{equation}
  where $N(e) := \left\{ K' \in \MTh \ | \  e \subset \mc{E}(K')
  \right\}$ and $|N(e)|$ denotes the cardinality of $N(e)$. Obviously,
  $1 \leq |N(e)| \leq 2$ and $|N(e)| = 1$ implies $e \in \MEh^b$.  By
  the property of $\RT_k$ space, $\bm{w}_h \in H(\div, \Omega)$ from
  these moments. The rest is to bound $\bm{p}_h - \bm{w}_h$.  On the
  element $K$, by \eqref{eq_Kdivmoments} and \eqref{eq_wKfi} we have
  that 
  \begin{displaymath}
    h_K^{-2} \|\bm{p}_h - \bm{w}_h \|_{L^2(K)}^2 + \| \nabla \cdot
    (\bm{p}_h - \bm{w}_h) \|_{L^2(K)}^2 \leq 
    C h_K^{-d} \left( \sum_{e \in \mc{E}(K)} 
    \sum_{i = 1}^{N_e}(p_{K, e}^i - w_{K, e}^i)^2\right).
  \end{displaymath}
  On the boundary face $e$, $\bm{p}_h$ and $\bm{w}_h$ clearly have the
  same moments on $e$.  A summation over all elements, together with
  the mesh regularity and \eqref{eq_wKfi} and \eqref{eq_q1q2f}, gives
  that 
  \begin{displaymath}
    \begin{aligned}
      \sum_{K \in \MTh} \big( h_K^{-2}  \|\bm{p}_h &- \bm{w}_h
      \|_{L^2(K)}^2 + \|\nabla \cdot ( \bm{p}_h- \bm{w}_h )\|_{L^2(K)}^2 
       \big) \leq C \sum_{e \in \MEh}\sum_{i=1}^{N_e}  
      h_e^{-d} (p_{K, e}^i - w_{K, e}^i)^2  \\
      & \leq  C \sum_{e \in \MEh^i} \sum_{i = 1}^{N_e} h_e^{-d}
      \left(p_{K, e}^i - \frac{p_{K, e}^i + p_{K', e}^i}{2} \right)^2
      \quad (e \text{ is shared by } K \text{ and } K') \\
      &\leq C \sum_{e \in \MEh^i} \sum_{i = 1}^{N_e} h_e^{-d}
      \left(p_{K, e}^i - p_{K', e}^i \right)^2 \leq C \sum_{e \in
      \MEh^i} h_e^{-1} \| \jump{\un \cdot \bm{p}_h} \|_{L^2(e)}^2.
    \end{aligned}
  \end{displaymath}
  This gives the estimate \eqref{eq_Hdivprojection} and completes the
  proof.
\end{proof}

Now we are ready to state that the bilinear form $a_h(\cdot; \cdot)$ 
is coercive under the energy norm $\enorm{\cdot}$. 
\begin{lemma}
  Let the bilinear form $a_h(\cdot, \cdot)$ be defined as 
  \eqref{eq_bilinearform}, there exists
  a constant $C$ such that
  \begin{equation}
    a_h(u_h, \bm{p}_h; u_h, \bm{p}_h) \geq C k^{-2} (1 + h + k^2 h^2)^{-1}
    \enorm{(u_h, \bm{p}_h)}^2,
    \label{eq_coercivity}
  \end{equation}
  for any $(u_h, \bm{p}_h) \in \bmr{V}_h^m \times \bmr{\Sigma}_h^m$.
  \label{le_coercivity}
\end{lemma}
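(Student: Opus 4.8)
The plan is to reduce the coercivity bound to the continuous stability inequality of Lemma~\ref{le_HelmholtzInequality} by replacing the discontinuous pair $(u_h,\bm{p}_h)$ with a nearby conforming pair $(v_h,\bm{w}_h)\in H^1_D(\Omega)\times H(\div,\Omega)$ produced by Lemmas~\ref{le_projection1} and~\ref{le_Hdivprojection}. For $(u_h,\bm{p}_h)\in\bmr{V}_h^m\times\bmr{\Sigma}_h^m$ I would abbreviate
\[
  T_1:=\|\nabla\cdot\bm{p}_h+ku_h\|_{L^2(\Omega)}^2,\quad T_2:=\|\nabla u_h-k\bm{p}_h\|_{L^2(\Omega)}^2,\quad B:=\sum_{e\in\MEh^R}h_e^{-1}\|\un\cdot\bm{p}_h+\ui u_h\|_{L^2(e)}^2,
\]
\[
  S_u:=\sum_{e\in\MEh^i\cup\MEh^D}h_e^{-1}\|\jump{u_h}\|_{L^2(e)}^2,\qquad S_p:=\sum_{e\in\MEh^i}h_e^{-1}\|\jump{\un\cdot\bm{p}_h}\|_{L^2(e)}^2,
\]
so that $a_h(u_h,\bm{p}_h;u_h,\bm{p}_h)=T_1+T_2+S_u+S_p+B$, while, since conforming functions have vanishing jumps, $\enorm{(u_h,\bm{p}_h)}^2=\bigl(k^2\|u_h\|_{L^2(\Omega)}^2+\|\nabla u_h\|_{L^2(\Omega)}^2\bigr)+\bigl(k^2\|\bm{p}_h\|_{L^2(\Omega)}^2+\|\nabla\cdot\bm{p}_h\|_{L^2(\Omega)}^2\bigr)+S_u+S_p+B$. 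As $S_u,S_p,B$ already appear with matching weights on both sides, it is enough to bound the first two parenthesised blocks by $Ck^2(1+h+k^2h^2)\,a_h(u_h,\bm{p}_h;u_h,\bm{p}_h)$.

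First I would set $\delta_u:=u_h-v_h$, $\bm{\delta}_p:=\bm{p}_h-\bm{w}_h$, so that by Lemmas~\ref{le_projection1},~\ref{le_Hdivprojection}, $\sum_K(h_K^{-2}\|\delta_u\|_{L^2(K)}^2+\|\nabla\delta_u\|_{L^2(K)}^2)\le CS_u$ and $\sum_K(h_K^{-2}\|\bm{\delta}_p\|_{L^2(K)}^2+\|\nabla\cdot\bm{\delta}_p\|_{L^2(K)}^2)\le CS_p$. Then I apply Lemma~\ref{le_HelmholtzInequality} to $(v_h,\bm{w}_h)$ and dominate the three residual terms on its right side by triangle inequalities, e.g. $\|\nabla v_h-k\bm{w}_h\|_{L^2(\Omega)}\le T_2^{1/2}+\|\nabla\delta_u\|_{L^2(\Omega)}+k\|\bm{\delta}_p\|_{L^2(\Omega)}$, and similarly for $\|\nabla\cdot\bm{w}_h+kv_h\|_{L^2(\Omega)}$ and $\|\un\cdot\bm{w}_h+\ui v_h\|_{L^2(\Gamma_R)}$. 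Using $h_K\le h$ gives $\|\delta_u\|_{L^2(\Omega)}\le h\bigl(\sum_Kh_K^{-2}\|\delta_u\|_{L^2(K)}^2\bigr)^{1/2}\le Ch\,S_u^{1/2}$ and likewise $\|\bm{\delta}_p\|_{L^2(\Omega)}\le Ch\,S_p^{1/2}$, so the terms $k\|\delta_u\|$, $k\|\bm{\delta}_p\|$ generate exactly the factor $kh$. For the boundary residual I would combine the elementwise trace inequality with the inverse inequality on the polynomial perturbations, $\|\delta_u\|_{L^2(e)}^2\le C(h_K^{-1}\|\delta_u\|_{L^2(K)}^2+h_K\|\nabla\delta_u\|_{L^2(K)}^2)\le Ch_K\bigl(h_K^{-2}\|\delta_u\|_{L^2(K)}^2+\|\nabla\delta_u\|_{L^2(K)}^2\bigr)$ and its analogue for $\bm{\delta}_p$, to obtain $\|\delta_u\|_{L^2(\Gamma_R)}^2\le Ch\,S_u$, $\|\un\cdot\bm{\delta}_p\|_{L^2(\Gamma_R)}^2\le Ch\,S_p$, together with $\|\un\cdot\bm{p}_h+\ui u_h\|_{L^2(\Gamma_R)}^2\le hB$ (since $h_e\le h$). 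Collecting and using $(1+kh+\sqrt h)^2\le C(1+h+k^2h^2)$ (an AM--GM estimate), Lemma~\ref{le_HelmholtzInequality} then yields
\[
  k^2\|v_h\|_{L^2(\Omega)}^2+\|\nabla v_h\|_{L^2(\Omega)}^2+k^2\|\bm{w}_h\|_{L^2(\Omega)}^2+\|\nabla\cdot\bm{w}_h\|_{L^2(\Omega)}^2\le Ck^2\bigl(T_1+T_2+(1+h+k^2h^2)(S_u+S_p)+hB\bigr).
\]

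Finally I would return from $(v_h,\bm{w}_h)$ to $(u_h,\bm{p}_h)$ through $u_h=v_h+\delta_u$, $\bm{p}_h=\bm{w}_h+\bm{\delta}_p$, using $k^2\|\delta_u\|_{L^2(\Omega)}^2+\|\nabla\delta_u\|_{L^2(\Omega)}^2\le C(1+k^2h^2)S_u$ and its counterpart with $S_p$, which only adds a lower-order multiple of $(1+h+k^2h^2)(S_u+S_p)$. Inserting this into the identity for $\enorm{(u_h,\bm{p}_h)}^2$, and absorbing the bare terms $S_u+S_p+B$ and the $(1+h+k^2h^2)(S_u+S_p)$ remainder by means of $k\ge k_0$ (hence $1\le k_0^{-2}k^2(1+h+k^2h^2)$, so the constant is allowed to depend on $k_0$), I arrive at $\enorm{(u_h,\bm{p}_h)}^2\le Ck^2(1+h+k^2h^2)\,a_h(u_h,\bm{p}_h;u_h,\bm{p}_h)$, which after inverting the constant is precisely~\eqref{eq_coercivity}. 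I expect the main obstacle to be the bookkeeping of the $h$- and $k$-powers through the trace and inverse estimates: each nonconforming perturbation enters the $L^2(\Gamma_R)$ norm and the $k\|\cdot\|_{L^2(\Omega)}$ norm with an extra factor $h^{1/2}$ or $h$, and one must verify that these combine into exactly $1+h+k^2h^2$ and no larger factor, while also checking at the outset that the $\RT$-based $\bm{w}_h$ from Lemma~\ref{le_Hdivprojection} genuinely lies in $H(\div,\Omega)$ so that Lemma~\ref{le_HelmholtzInequality} is applicable.
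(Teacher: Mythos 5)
Your argument is correct and follows essentially the same route as the paper's proof: conforming reconstructions via Lemmas \ref{le_projection1} and \ref{le_Hdivprojection}, the continuous stability estimate of Lemma \ref{le_HelmholtzInequality} applied to the reconstructed pair, and triangle/trace estimates to transfer back to $(u_h,\bm{p}_h)$, producing the same factor $k^2(1+h+k^2h^2)$. The only minor deviation is on $\Gamma_R$: the paper uses that $\bm{p}_h$ and its $\RT$ reconstruction share the boundary-face moments, so $\un\cdot(\bm{p}_h-\bm{w}_h)$ vanishes there, whereas you bound that term by $C h S_p$ through a discrete trace inequality—both yield the same final estimate.
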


\begin{proof}
  Clearly, we have that
  \begin{displaymath}
    \begin{aligned}
      a_h(u_h, \bm{p}_h; u_h&, \bm{p}_h) = \sum_{K \in \MTh} \left(
      \| \nabla u_h - k \bm{p}_h \|^2_{L^2(K)} + \| \nabla \cdot
      \bm{p}_h + k u_h \|^2_{L^2(K)} \right) \\
      &+ \sum_{e \in \MEh^i} \frac{1}{h_e} \left( \| \jump{u_h}
      \|^2_{L^2(e)} + \| \jump{\bm{p}_h} \|^2_{L^2(e)} \right) \\
      &+ \sum_{e \in \MEh^D} \frac{1}{h_e} \| u_h \|^2_{L^2(e)}
      + \sum_{e \in \MEh^R} \frac{1}{h_e} \| \un \cdot \bm{p}_h +
      \ui u_h \|^2_{L^2(e)}.
    \end{aligned}
  \end{displaymath}
  By Lemma \ref{le_projection1}, there exists a polynomial
  $v_h \in \bmr{V}_h^m \cap H^1_D(\Omega)$ and a polynomial
  $\bm{q}_h \in \bmr{\Sigma}_h^m \cap H(\div,\Omega)$, such that
  \begin{displaymath}
    \unorm{u_h - v_h}^2 \leq C \sum_{e \in \MEh^i \cup \MEh^D}
    (h_e^{-1} + k^2 h_e) \| \jump{u_h} \|^2_{L^2(e)} \leq
    C (1 + k^2 h^2) a_h(u_h,\bm{p}_h; u_h, \bm{p}_h),
  \end{displaymath}
  and
  \begin{displaymath}
    \pnorm{\bm{p}_h - \bm{q}_h}^2 \leq C \sum_{e \in \MEh^i}
    (h_e^{-1} + k^2 h_e) \| \jump{\bm{p}_h} \|^2_{L^2(e)}
    \leq C (1 + k^2 h^2) a_h(u_h, \bm{p}_h; u_h, \bm{p}_h).
  \end{displaymath}
  Hence,
  \begin{displaymath}
    \begin{aligned}
      \enorm{(u_h, \bm{p}_h)}^2 &\leq C \left(\unorm{u_h - v_h}^2 +
      \pnorm{\bm{p}_h - \bm{q}_h}^2 + \unorm{v_h}^2 +
      \pnorm{\bm{q}_h}^2 + \sum_{e \in \MEh^R} \frac{1}{h_e}
      \| \un \cdot \bm{p}_h + \ui u_h \|^2_{L^2(e)}
      \right) \\
      &\leq C \left( (1+k^2h^2) a_h(u_h, \bm{p}_h; u_h, \bm{p}_h)
      + \unorm{v_h}^2 + \pnorm{\bm{q}_h}^2
      + \sum_{e \in \MEh^R} \frac{1}{h_e} \| \un \cdot \bm{p}_h + \ui
      u_h \|^2_{L^2(e)} \right).
    \end{aligned}
  \end{displaymath}
  By Lemma \ref{le_HelmholtzInequality}, we get that
  \begin{displaymath}
    (\unorm{v_h} + \pnorm{\bm{q}_h})^2 \leq C k^2 \left(\| \nabla v_h -
    k\bm{q}_h \|_{L^2(\Omega)} + \| \nabla \cdot \bm{q}_h + kv_h
    \|_{L^2(\Omega)} + \| \un \cdot \bm{q}_h + \ui v_h
    \|_{L^2(\Gamma_R)} \right)^2.
  \end{displaymath}
  We apply the triangle inequality to derive that
  \begin{displaymath}
    \begin{aligned}
      \| \nabla v_h - k\bm{q}_h \|^2_{L^2(\Omega)} &\leq C \left( \|
      \nabla u_h - k\bm{p}_h \|^2_{L^2(\MTh)} + \| \nabla (u_h - v_h)
      \|^2_{L^2(\MTh)} + k^2 \|\bm{p}_h - \bm{q}_h \|^2_{L^2(\MTh)}
      \right) \\
      &\leq C\left(\| \nabla u_h - k\bm{p}_h \|^2_{L^2(\MTh)} +
      \unorm{u_h - v_h}^2 + \pnorm{\bm{p}_h - \bm{q}_h}^2 \right) \\
      &\leq C (1 + k^2 h^2) a_h(u_h, \bm{p}_h; u_h, \bm{p}_h).
    \end{aligned}
  \end{displaymath}
  Similarly,
  \begin{displaymath}
    \| \nabla \cdot \bm{q}_h + kv_h \|^2_{L^2(\Omega)} \leq C (1 +
    k^2 h^2) a_h(u_h, \bm{p}_h; u_h, \bm{p}_h).
  \end{displaymath}
  From the proof of Lemma \ref{le_Hdivprojection}, $\bm{p}_h$ and
  $\bm{q}_h$ has the same moments on any boundary face $e$, which
  implies $\|\un \cdot \bm{q}_h - \un \cdot \bm{p}_h \|_{L^2(e)} = 0$
  for any $e \in \MEh^R$. Together with the triangle inequality, 
  we have that 
  \begin{displaymath}
    \| \un \cdot \bm{q}_h + \ui v_h \|_{L^2(\Gamma_R)}^2 \leq \sum_{e
    \in \MEh^R} \left( \| \un \cdot \bm{p}_h + \ui u_h \|_{L^2(e)}^2 +
    \| u_h - v_h \|_{L^2(e)}^2
    \right).
  \end{displaymath}
  The trace inequality gives us 
  \begin{displaymath}
    h_e^{-1} \| u_h-v_h \|^2_{L^2(e)} \leq C \left( h_e^{-2}
    \|u_h-v_h\|^2_{L^2(K)} + \| \nabla(u_h-v_h) \|^2_{L^2(K)} \right),
    \qquad \forall e \in \MEh^R,
  \end{displaymath}
  where $K$ is an element such that $e \in \mc{E}(K)$. We apply Lemma
  \ref{le_projection1} to conclude that 
  \begin{displaymath}
    \|\un \cdot \bm{q}_h + \ui v_h \|_{L^2(\Gamma_R)}^2 \leq C h
    a_h(u_h, \bm{p}_h; u_h, \bm{p}_h).
  \end{displaymath}
  Combining all the inequalities above, we arrive at
  \begin{displaymath}
    a_h(u_h, \bm{p}_h; u_h, \bm{p}_h) \geq C k^{-2} (1 + h + k^2 h^2)^{-1}
    \enorm{(u_h, \bm{p}_h)}^2,
  \end{displaymath}
  which gives the estimate \eqref{eq_coercivity} and completes the proof.
\end{proof}

In addition, the bilinear form $a_h(\cdot; \cdot)$ satisfies the
Galerkin orthogonality:
\begin{lemma}
  Let the bilinear form $a_h(\cdot; \cdot)$ be defined as
  \eqref{eq_bilinearform}. Let $(u, \bm{p})
  \in H^1(\Omega) \times H(\div, \Omega)$ be the exact solution to
  \eqref{eq_firstHelmholtz}, and let $(u_h, \bm{p}_h) \in \bmr{V}_h^m
  \times \bmr{\Sigma}_h^m$ be the solution to \eqref{eq_bilinear}. 
  Then, the following equation holds true
  \begin{equation}
    a_h(u - u_h, \bm{p} - \bm{p}_h; v_h, \bm{q}_h) = 0,
    \label{eq_orthogonality}
  \end{equation}
  for any $(v_h, \bm{q}_h) \in \bmr{V}_h^m \times \bmr{\Sigma}_h^m$.
  \label{le_othogonality}
\end{lemma}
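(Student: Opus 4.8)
The plan is to reduce the claim to a consistency identity for the exact solution and then check that identity term by term. Since the bilinear form \eqref{eq_bilinearform} is linear in its first pair of arguments, and since its defining formula only requires that pair to lie in $H^1(\MTh)\times H(\div,\MTh)$, the quantity $a_h(u,\bm{p};v_h,\bm{q}_h)$ is meaningful for the exact solution $(u,\bm{p})$. Writing $a_h(u-u_h,\bm{p}-\bm{p}_h;v_h,\bm{q}_h)=a_h(u,\bm{p};v_h,\bm{q}_h)-a_h(u_h,\bm{p}_h;v_h,\bm{q}_h)$ and using \eqref{eq_bilinear} to replace the second term by $l_h(v_h,\bm{q}_h)$, it suffices to prove the consistency identity $a_h(u,\bm{p};v_h,\bm{q}_h)=l_h(v_h,\bm{q}_h)$ for every $(v_h,\bm{q}_h)\in\bmr{V}_h^m\times\bmr{\Sigma}_h^m$.

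To prove this identity I would substitute $(u,\bm{p})$ into the five groups of terms in \eqref{eq_bilinearform}. In the volume terms, the first two equations of the first-order system \eqref{eq_firstHelmholtz} give $\nabla\cdot\bm{p}+ku=-\wt{f}$ and $\nabla u-k\bm{p}=\bm{0}$, so the second volume integral drops out entirely and the first collapses to $-\sum_{K}\int_K\wt{f}\,\overline{(\nabla\cdot\bm{q}_h+kv_h)}\,\d{x}$, which, using $k\wt{f}=f$, yields precisely the $\wt{f}$ and $f$ contributions of $l_h$. On the interior faces, what matters is the conformity of the exact solution: $u\in H^1(\Omega)$ makes the trace single valued, so $\jump{u}=\bm{0}$, and $\bm{p}\in H(\div,\Omega)$ makes the normal trace single valued, so $\jump{\un\cdot\bm{p}}=0$; hence every term over $\MEh^i$ vanishes. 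Finally, the boundary equations $u=g_0$ on $\Gamma_D$ and $\un\cdot\bm{p}+\ui u=\wt{g}$ on $\Gamma_R$ turn the $\MEh^D$ and $\MEh^R$ sums into the remaining data terms of $l_h$. Collecting these contributions reproduces $l_h(v_h,\bm{q}_h)$ and closes the argument.

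I do not expect a genuine obstacle here: Galerkin orthogonality for a residual-based least squares scheme is automatic once consistency of the exact solution has been checked, and all the steps are routine manipulations. The only points needing a little attention are bookkeeping ones: the forms $a_h$ and $J_h$ must be understood as extended to the broken spaces so that $(u,\bm{p})$ may be inserted; the interior-face penalty terms are killed precisely by the global $H^1$ and $H(\div)$ regularity of $(u,\bm{p})$, not by the equations; and the scalings $\wt{f}=f/k$, $\wt{g}=g/k$ have to be tracked so that the volume and $\Gamma_R$ residual contributions land on the correct terms of $l_h$. An even shorter alternative is to observe that $J_h(u,\bm{p})=0$, since every residual entering \eqref{eq_functional} vanishes for the exact solution; thus $(u,\bm{p})$ is a global minimizer of $J_h$ over $\bmr{V}_h\times\bmr{\Sigma}_h$, and writing its first-order optimality condition against the discrete variations $(v_h,\bm{q}_h)$ --- which is exactly the Euler--Lagrange relation $a_h(\cdot;v_h,\bm{q}_h)=l_h(v_h,\bm{q}_h)$ --- gives the identity directly, after which linearity in the first argument yields \eqref{eq_orthogonality}.
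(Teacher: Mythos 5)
Your proposal is correct and follows essentially the same route as the paper: the paper likewise expands $a_h(u-u_h,\bm{p}-\bm{p}_h;v_h,\bm{q}_h)$, uses the conformity of the exact solution to kill the interior-face jump terms, substitutes the first-order system and boundary conditions to collapse the volume and boundary residuals into the data terms, and recognizes the result as $l_h(v_h,\bm{q}_h)-a_h(u_h,\bm{p}_h;v_h,\bm{q}_h)=0$. Your splitting into a consistency identity plus the discrete Euler--Lagrange equation (and the remark that $J_h(u,\bm{p})=0$) is only a cosmetic reorganization of that same computation, so there is nothing further to add.
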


\begin{proof}                                                         
  The regularity of the exact solution $(u, \bm{p})$ directly brings  
  us that                                                             
  \begin{equation*}                                                   
    \jump{u} = 0, \qquad \jump{\un \cdot \bm{p}} = 0, \qquad \text{on }
    \forall e \in \MEh^i.                                             
  \end{equation*}                                                     
  Hence,                                                              
  \begin{align*}                                                   
    a_h(u-u_h, &\bm{p}-\bm{p}_h; v_h, \bm{q}_h) = \sum_{K \in \MTh} 
    \int_K (\nabla \cdot (\bm{p} - \bm{p}_h) + k(u - u_h)) \ 
    \overline{(\nabla \cdot \bm{q}_h + k v_h)} \d{x} \\
    &+ \sum_{K \in \MTh} \int_K (\nabla (u - u_h) - k(\bm{p} -
    \bm{p}_h)) \cdot \overline{(\nabla v_h - k \bm{q}_h)} \d{x} \\
    &- \sum_{e \in \MEh^i} \frac{1}{h_e} \int_e \jump{u_h}\ 
    \overline{\jump{v_h}} \d{s} - \sum_{e \in \MEh^i}
    \frac{1}{h_e} \int_e \jump{\un \cdot \bm{p}_h} \ 
    \overline{\jump{\un \cdot \bm{q}_h}} \d{s} \\
    &+ \sum_{e \in \MEh^D} \frac{1}{h_e} \int_e (u-u_h)\ 
    \overline{v_h} \d{s} + \sum_{e \in \MEh^R} \frac{1}{h_e} \int_e
    (\un \cdot (\bm{p} - \bm{p}_h) + \ui(u-u_h)) \ \overline{(\un 
    \cdot \bm{q}_h + \ui v_h)} \d{s} \\
    & = -\sum_{K \in \MTh} \int_K \wt{f} \ 
    \overline{(\nabla \cdot \bm{q}_h + k v_h)} \d{x}
    + \sum_{e \in \MEh^D} \frac{1}{h_e}
    \int_e g_0 \ \overline{v_h} \d{s} \\
    &\quad + \sum_{e \in \MEh^R} \frac{1}{h_e} \int_e \wt{g}      
    \ \overline{\un \cdot \bm{q}_h + \ui v_h} \d{s}
    - a_h(u_h, \bm{p}_h;v_h, \bm{q}_h) \\
    & = l_h( v_h, \bm{q}_h) - a_h(u_h, \bm{p}_h; v_h, \bm{q}_h) \\  
    & = 0,                                                          
  \end{align*}                                                     
  which yields the equation \eqref{eq_orthogonality}                  
  and completes the proof.
\end{proof}

Finally, we arrive at the {\it a priori} error estimate (with respect
to a fixed wavenumber $k$) of the method under the energy norm
$\enorm{\cdot}$.
\begin{theorem}
  Let $(u, \bm{p}) \in H^{m+1}(\Omega) \times H^{m+1}(\Omega)^d$ 
  be the exact solution to \eqref{eq_firstHelmholtz}. Let $(u_h,
  \bm{p}_h) \in \bmr{V}_h^m \times \bmr{\Sigma}_h^m$ be the numerical
  solution to \eqref{eq_bilinear}. Then there exists a constant $C$
  such that 
  \begin{equation}
    \enorm{(u-u_h, \bm{p}-\bm{p}_h)} \leq C k^2 (1+h+k^2h^2) (1+ k^2
    h^2)^{\frac{1}{2}} h^m (\| u \|_{H^{m+1}(\Omega)} + \| \bm{p} \|
    _{H^{m+1}(\Omega)}).
    \label{eq_estimate}
  \end{equation}
  \label{th_estimate}
\end{theorem}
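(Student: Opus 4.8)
The plan is to derive a Strang-type quasi-optimality estimate for $\enorm{(u-u_h,\bm p-\bm p_h)}$ by combining the coercivity of Lemma~\ref{le_coercivity}, the continuity of Lemma~\ref{le_continuity} and the Galerkin orthogonality of Lemma~\ref{le_othogonality}, and then to bound the resulting best-approximation term by a standard polynomial approximation estimate measured in the energy norm $\enorm{\cdot}$. The only subtlety is that Lemma~\ref{le_coercivity} is stated for discrete functions, so the difference $u-u_h$ itself cannot be fed into it directly; one has to pass through a discrete intermediate function and use a triangle inequality.

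Concretely, let $(v_h,\bm q_h)\in\bmr V_h^m\times\bmr\Sigma_h^m$ be arbitrary. Since $(u_h-v_h,\bm p_h-\bm q_h)$ is a discrete function, Lemma~\ref{le_coercivity} applies to it, and writing $u_h-v_h=(u_h-u)+(u-v_h)$ in the first argument, Lemma~\ref{le_othogonality} kills the term $a_h(u_h-u,\bm p_h-\bm p;u_h-v_h,\bm p_h-\bm q_h)=0$, leaving
\begin{displaymath}
  \enorm{(u_h-v_h,\bm p_h-\bm q_h)}^2 \leq C k^2(1+h+k^2h^2)\, a_h(u-v_h,\bm p-\bm q_h;\,u_h-v_h,\bm p_h-\bm q_h).
\end{displaymath}
Applying Lemma~\ref{le_continuity} to the right-hand side and cancelling one factor of $\enorm{(u_h-v_h,\bm p_h-\bm q_h)}$, then invoking the triangle inequality $\enorm{(u-u_h,\bm p-\bm p_h)}\leq\enorm{(u-v_h,\bm p-\bm q_h)}+\enorm{(u_h-v_h,\bm p_h-\bm q_h)}$, yields
\begin{displaymath}
  \enorm{(u-u_h,\bm p-\bm p_h)} \leq C k^2(1+h+k^2h^2)\inf_{(v_h,\bm q_h)\in\bmr V_h^m\times\bmr\Sigma_h^m}\enorm{(u-v_h,\bm p-\bm q_h)}.
\end{displaymath}

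To bound the infimum I would take $(v_h,\bm q_h)$ to be the componentwise $L^2$-orthogonal projection (equivalently the Lagrange interpolant) of $(u,\bm p)$ onto $\bmr V_h^m\times\bmr\Sigma_h^m$. The element contributions in $\unorm{u-v_h}$ and $\pnorm{\bm p-\bm q_h}$, namely $\sum_K(k^2\|u-v_h\|_{L^2(K)}^2+\|\nabla(u-v_h)\|_{L^2(K)}^2)$ and $\sum_K(k^2\|\bm p-\bm q_h\|_{L^2(K)}^2+\|\nabla\cdot(\bm p-\bm q_h)\|_{L^2(K)}^2)$, are controlled by the standard local estimates $\|u-v_h\|_{L^2(K)}\leq Ch_K^{m+1}\|u\|_{H^{m+1}(K)}$ and $\|\nabla(u-v_h)\|_{L^2(K)}\leq Ch_K^{m}\|u\|_{H^{m+1}(K)}$ (and analogously for $\bm p$), summing to an $O\big(h^{2m}(1+k^2h^2)(\|u\|_{H^{m+1}(\Omega)}^2+\|\bm p\|_{H^{m+1}(\Omega)}^2)\big)$ bound. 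For the face terms $h_e^{-1}\|\jump{u-v_h}\|_{L^2(e)}^2$, $h_e^{-1}\|\jump{\un\cdot(\bm p-\bm q_h)}\|_{L^2(e)}^2$ and the Robin term $h_e^{-1}\|\un\cdot(\bm p-\bm q_h)+\ui(u-v_h)\|_{L^2(e)}^2$, I would bound $\jump{u-v_h}$ and $\jump{\un\cdot(\bm p-\bm q_h)}$ by the one-sided interpolation errors on the elements adjacent to $e$ and then apply the multiplicative trace inequality $h_e^{-1}\|w\|_{L^2(e)}^2\leq C(h_K^{-2}\|w\|_{L^2(K)}^2+\|\nabla w\|_{L^2(K)}^2)$, which reduces everything to the element quantities already estimated and costs no extra power of $k$. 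Collecting these gives $\inf\enorm{(u-v_h,\bm p-\bm q_h)}\leq C h^m(1+k^2h^2)^{\frac12}(\|u\|_{H^{m+1}(\Omega)}+\|\bm p\|_{H^{m+1}(\Omega)})$, and substituting into the quasi-optimality bound produces exactly \eqref{eq_estimate}.

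Given Lemmas~\ref{le_continuity}, \ref{le_coercivity} and \ref{le_othogonality}, the argument is essentially routine; the point requiring care is the $k$--$h$ bookkeeping: ensuring that coercivity is applied only to the discrete difference $u_h-v_h$ (hence the need for the triangle inequality), that the factor $k^2(1+h+k^2h^2)$ from Lemma~\ref{le_coercivity} is carried through unchanged, and that the $k^2\|\cdot\|_{L^2}^2$ contributions in $\unorm{\cdot}$ and $\pnorm{\cdot}$ are what generate the extra $(1+k^2h^2)^{\frac12}$ factor in the approximation error rather than a bare $k$. I would state the local approximation and trace estimates rather than rederive them.
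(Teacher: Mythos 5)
Your quasi-optimality argument is exactly the paper's: apply the coercivity of Lemma~\ref{le_coercivity} to the discrete difference $(u_h-v_h,\bm{p}_h-\bm{q}_h)$, use the orthogonality of Lemma~\ref{le_othogonality} to replace $u_h,\bm{p}_h$ by $u,\bm{p}$ in the first slot, apply the continuity of Lemma~\ref{le_continuity}, cancel, and conclude with the triangle inequality to obtain the C\'ea-type bound \eqref{eq_Cea} with the factor $k^2(1+h+k^2h^2)$. Where you diverge is in bounding the best-approximation term. The paper chooses a \emph{conforming} pair: the Lagrange interpolant $u_I\in H^1(\Omega)$ and the BDM interpolant $\bm{p}_I\in H(\div,\Omega)$, so that $\jump{u-u_I}$ and $\jump{\un\cdot(\bm{p}-\bm{p}_I)}$ vanish on all interior faces and only the boundary face terms need a trace estimate; on the Robin faces the paper inserts the $L^2$ projection $\Pi_h^0\bm{p}$ so that only a discrete inverse inequality and the projection's face approximation property are needed, avoiding any use of the full gradient of the BDM error. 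You instead take a purely elementwise (discontinuous) projection/interpolant and estimate \emph{all} face terms, including the interior jumps, by one-sided multiplicative trace inequalities on the adjacent elements. This is legitimate and arguably more elementary (no BDM machinery), but note two small points of bookkeeping: (i) for the normal-jump and Robin terms your trace inequality requires the full gradient bound $\|\nabla(\bm{p}-\bm{q}_h)\|_{L^2(K)}\leq Ch^m\|\bm{p}\|_{H^{m+1}(K)}$, which is not among the ``element quantities already estimated'' (the energy norm only carries $\nabla\cdot$) but does hold for componentwise approximation since $\bm{p}\in H^{m+1}(\Omega)^d$ — you should state it explicitly; (ii) the parenthetical ``equivalently the Lagrange interpolant'' is inaccurate (the $L^2$ projection and the interpolant are different operators), though either choice works. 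With those clarifications your route delivers the same bound $Ch^m(1+k^2h^2)^{1/2}(\|u\|_{H^{m+1}(\Omega)}+\|\bm{p}\|_{H^{m+1}(\Omega)})$ for the infimum, hence \eqref{eq_estimate}.
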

\begin{proof}
  By Lemma \ref{le_othogonality}, we have that
  \begin{displaymath}
    a_h(u - u_h, \bm{p} - \bm{p}_h; v_h, \bm{q}_h) = 0, \quad \forall 
    (v_h, \bm{q}_h) \in \bmr{V}_h^m \times \bmr{\Sigma}_h^m
  \end{displaymath}
  Together with Lemma \ref{le_coercivity} and Lemma
  \ref{le_continuity}, we obtain that
  \begin{displaymath}
    \begin{aligned}
      \enorm{(u_h-v_h, \bm{p}_h-\bm{q}_h)}^2 &\leq C k^2 (1 + h + 
      k^2 h^2) a_h(u_h-v_h, \bm{p}_h - \bm{q}_h;
      u_h-v_h, \bm{p}_h - \bm{q}_h) \\
      &= C k^2 (1 + h + k^2 h^2)
      a_h(u-v_h, \bm{p}-\bm{q}_h; u_h-v_h, \bm{p}_h - \bm{q}_h)
      \\
      &\leq C k^2(1 + h + k^2 h^2) \enorm{(u-v_h, \bm{p}-\bm{q}_h)}
      \enorm{(u_h-v_h, \bm{p}_h - \bm{q}_h)},
    \end{aligned}
  \end{displaymath}
  for any $(v_h, \bm{q}_h) \in \bmr{V}_h^m \times \bmr{\Sigma}_h^m$.
  We eliminate the term $\enorm{(u_h-v_h, \bm{p}_h-\bm{q}_h)}$
  on both sides and apply the triangle inequality to get that
  \begin{equation}
    \enorm{(u-u_h, \bm{p}-\bm{p}_h)} \leq C k^2(1 + h + k^2 h^2) \inf_{(v_h,
    \bm{q}_h) \in \bmr{V}_h^m \times \bmr{\Sigma}_h^m} \enorm{(u -
    v_h, \bm{p} - \bm{q}_h)}.
    \label{eq_Cea}
  \end{equation}
  We denote by $u_I \in \bmr{V}_h^m$ the standard Lagrange interpolant
  of the exact solution $u$, and by $\bm{p}_I \in \bmr{\Sigma}_h$ the
  BDM interpolant of the exact solution $\bm{p}$. We refer to
  \cite{ciarlet2002finite} and \cite{Brezzi1985two} for details of two
  interpolation operators. By the approximation properties of these
  interpolant operators, we get that
  \begin{equation}
    \begin{aligned}
      &\| u-u_I \|_{L^2(\Omega)} \leq Ch^{m+1} \|u\|_{H^{m+1}(\Omega)},
      \quad \| \nabla(u-u_I) \|_{L^2(\Omega)} \leq Ch^m \|u\|_{H^{m+1}
      (\Omega)}, \\
      &\| \bm{p}-\bm{p}_I \|_{L^2(\Omega)} \leq C h^{m+1}
      \|\bm{p}\|_{H^{m+1}(\Omega)}, \quad \| \nabla \cdot (\bm{p}-
      \bm{p}_I) \|_{L^2(\Omega)} \leq Ch^{m} \|\nabla \cdot \bm{p}\|
      _{H^{m}(\Omega)}.
    \end{aligned}
    \label{eq_interpolant}
  \end{equation}
  We refer to \cite[Theorem 3.2.1]{ciarlet2002finite} and
  \cite[Proposition 2.5.4]{boffi2013mixed} for the proof of
  these inequalities. Since $u_I \in H^1(\Omega)$ and $\bm{p}_I \in
  H(\div, \Omega)$, we have
  \begin{equation}
    \jump{u-u_I} = 0, \quad
    \jump{\un \cdot (\bm{p} - \bm{p}_I)} = 0, \qquad \text{ on }
    \forall e \in \MEh^i.
    \label{eq_jump}
  \end{equation}
  The trace inequality brings us that
  \begin{equation}
    h_e^{-1} \| u-u_I \|^2_{L^2(e)} \leq C \left( h_e^{-2}
    \|u-u_I\|^2_{L^2(K)} + \| \nabla(u-u_I) \|^2_{L^2(K)} \right),
    \qquad \forall e \in \MEh^b,
    \label{eq_trace}
  \end{equation}
  where $K$ is an element having $e$ as a face.
  Denote by $\Pi_h^0$ the $L^2$ projection onto $\bmr{\Sigma}^m_h$. 
  Using \eqref{eq_trace} and the inverse inequality , we derive that
  \begin{equation}
    \begin{aligned}
      h_e^{-1} \| \un \cdot (\bm{p} - \bm{p}_I) &+ \ui (u-u_I)
      \|^2_{L^2(e)} \leq C h_e^{-1} \left( \| \bm{p} - \bm{p}_I
      \|^2_{L^2(e)} + \| u-u_I \|^2_{L^2(e)} \right) \\
      & \leq C h_e^{-1} \left( \| \Pi_h^0(\bm{p} -
      \bm{p}_I)\|^2_{L^2(e)} + \| \bm{p} - \Pi_h^0 \bm{p} \|^2_{L^2(e)}
      + \|u-u_I\|^2_{L^2(e)} \right) \\
      &\leq C \left( h_e^{-2} \| \bm{p}-\bm{p}_I \|^2_{L^2(K)} +
      h_e^{-2} \| u-u_I \|^2_{L^2(K)} + \| \nabla(u-u_I) \|^2_{L^2(K)}
      \right. \\
      &\ \  \left. + h_e^{-1} \|\bm{p} - \Pi_h^0 \bm{p}
      \|^2_{L^2(e)} \right).
    \end{aligned}
    \label{eq_Robin}
  \end{equation}
  Combining with \eqref{eq_interpolant}, \eqref{eq_jump},
  \eqref{eq_trace}, \eqref{eq_Robin} and the approximation property of
  the $L^2$ projection \cite[lemma 4.3]{Houston2005interior}, we arrive
  at
  \begin{displaymath}
    \enorm{(u-u_I, \bm{p}-\bm{p}_I)}^2 \leq C(1+k^2h^2)h^{2m}(\| u
    \|_{H^{m+1}(\Omega)}^2 + \| \bm{p} \|_{H^{m+1}(\Omega)}^2)
  \end{displaymath}
  Let $v_h = u_I$ and $\bm{q}_h = \bm{p}_I$ in \eqref{eq_Cea}, then 
  the above estimate gives the error estimate \eqref{eq_estimate}, 
  which completes the proof.
\end{proof}

\begin{remark}
  We have proved that the numerical solution $(u_h, \bm{p}_h)$ of our
  method has the optimal convergence rate under the energy norm
  $\enorm{\cdot}$. By the definition of the energy norm, the error 
  under the $L^2$ norm for both variables has at least sub-optimal 
  convergence rate, i.e.
  \begin{displaymath}
    \begin{aligned}
      \|u-u_h\|_{L^2(\Omega)} &+ \| \bm{p}-\bm{p}_h \|_{L^2(\Omega)}
      \\
      &\leq C k (1+h+k^2h^2) (1+ k^2h^2)^{\frac{1}{2}} h^m 
      (\| u \|_{H^{m+1}(\Omega)} + \| \bm{p} \|_{H^{m+1}(\Omega)}).
    \end{aligned}
  \end{displaymath}
  It can be seen that the degree of $k$ in the $L^2$ error estimate is
  one less than that in the error estimate under the energy norm
  $\enorm{\cdot}$. 
  In numerical experiments in the next section, we observe the optimal
  convergence rate for the variable $u$ and sub-optimal convergence
  rate for the variable $\bm{p}$ for the $L^2$ error. 
\end{remark}

Another advantage of our method is that the least squares functional
\eqref{eq_functional} can provide a natural mesh refinement
indicator $\eta_K$ for any element $K$, which is defined by
\begin{equation}
  \begin{aligned}
    \eta_K^2 := &\| \nabla \cdot \bm{p}_h + ku_h + \wt{f} \|_{L^2(K)}^2
    + \| \nabla u_h - k \bm{p}_h \|_{L^2(K)}^2 \\
    &+ \sum_{e \in \MEh^i \cap \mc{E}(K)} \frac{1}{h_e} ( \|
    \jump{u_h} \|_{L^2(e)}^2 + \| \jump{\un \cdot \bm{p}_h}
    \|_{L^2(e)}^2) \\
    &+ \sum_{e \in \MEh^D \cap \mc{E}(K)} \frac{1}{h_e} \| u_h - g_0
    \|_{L^2(e)}^2  + \sum_{e \in \MEh^R \cap \mc{E}(K)} \frac{1}{h_e}
    \| \un \cdot \bm{p}_h + \ui u_h - \wt{g} \|_{L^2(e)}^2.
  \end{aligned}
  \label{eq_etaK}
\end{equation}
where $\mc{E}(K)$ is the $d-1$ dimensional faces of $K$. We have the 
following lemma to show that the indicator is exact with respect to
the energy norm $\enorm{\cdot}$.
\begin{lemma}
  Let $(u, \bm{p})$ be the exact solution to
  \eqref{eq_firstHelmholtz}, and let $(u_h, \bm{p}_h) \in
  \bmr{V}_h^m \times \bmr{\Sigma}_h^m$ be the numerical solution to
  \eqref{eq_bilinear}. Then there exists a constant $C$ such that
  \begin{equation}
    \sum_{K \in \MTh} \eta_K^2 \leq C \enorm{(u - u_h,
    \bm{p} - \bm{p}_h)} ^2.
    \label{eq_estimatorup}
  \end{equation}
  \label{le_estimatorup}
\end{lemma}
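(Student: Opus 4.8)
The plan is to recognize that this is just the reliability half of an a posteriori estimate, and that it follows by direct comparison of residuals with errors. First I would note that summing the local indicators recovers the global functional up to a harmless constant: each volumetric and boundary term of $\eta_K^2$ contributes once, while each interior face is counted once per adjacent element, i.e. at most twice, so $\sum_{K\in\MTh}\eta_K^2 \le 2\,J_h(u_h,\bm{p}_h)$ with $J_h$ as in \eqref{eq_functional}. Next I would use that the exact solution annihilates the functional: since $(u,\bm{p})\in H^1(\Omega)\times H(\div,\Omega)$ solves \eqref{eq_firstHelmholtz}, all residuals vanish — the two volumetric equations hold pointwise, $\jump{u}=\jump{\un\cdot\bm{p}}=0$ on interior faces, $u=g_0$ on $\Gamma_D$, and $\un\cdot\bm{p}+\ui u=\wt{g}$ on $\Gamma_R$ — so $J_h(u,\bm{p})=0$ and therefore $J_h(u_h,\bm{p}_h)=J_h(u_h,\bm{p}_h)-J_h(u,\bm{p})$.

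Writing $e_u := u-u_h$ and $\bm{e}_p := \bm{p}-\bm{p}_h$, I would then subtract termwise to re-express each residual of $J_h(u_h,\bm{p}_h)$ through the error: $\nabla\cdot\bm{p}_h+ku_h+\wt{f} = -(\nabla\cdot\bm{e}_p+k e_u)$, $\nabla u_h-k\bm{p}_h = -(\nabla e_u-k\bm{e}_p)$, $\jump{u_h}=-\jump{e_u}$ and $\jump{\un\cdot\bm{p}_h}=-\jump{\un\cdot\bm{e}_p}$ on interior faces, $u_h-g_0=-e_u$ on $\Gamma_D$, and $\un\cdot\bm{p}_h+\ui u_h-\wt{g} = -(\un\cdot\bm{e}_p+\ui e_u)$ on $\Gamma_R$. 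It then remains to bound each of the resulting contributions by $\enorm{(e_u,\bm{e}_p)}^2$. For the volumetric terms I would use $\|a+b\|^2\le 2\|a\|^2+2\|b\|^2$ to get $\|\nabla\cdot\bm{e}_p+k e_u\|_{L^2(K)}^2 \le C(k^2\|e_u\|_{L^2(K)}^2+\|\nabla\cdot\bm{e}_p\|_{L^2(K)}^2)$ and $\|\nabla e_u-k\bm{e}_p\|_{L^2(K)}^2 \le C(\|\nabla e_u\|_{L^2(K)}^2+k^2\|\bm{e}_p\|_{L^2(K)}^2)$; summed over $K$ these are dominated by $\unorm{e_u}^2+\pnorm{\bm{e}_p}^2$. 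The interior-face contributions $\frac1{h_e}(\|\jump{e_u}\|_{L^2(e)}^2+\|\jump{\un\cdot\bm{e}_p}\|_{L^2(e)}^2)$ appear verbatim in $\unorm{\cdot}$ and $\pnorm{\cdot}$; the $\Gamma_D$-term $\frac1{h_e}\|e_u\|_{L^2(e)}^2=\frac1{h_e}\|\jump{e_u}\|_{L^2(e)}^2$ is part of the $\MEh^i\cup\MEh^D$ sum in $\unorm{\cdot}$; and the $\Gamma_R$-term $\frac1{h_e}\|\un\cdot\bm{e}_p+\ui e_u\|_{L^2(e)}^2$ is exactly the last term defining $\enorm{\cdot}$. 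Collecting everything yields $J_h(u_h,\bm{p}_h)\le C\enorm{(e_u,\bm{e}_p)}^2$, and combining with the first step gives \eqref{eq_estimatorup}.

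There is essentially no hard step here; the argument is pure bookkeeping, so I do not expect a genuine obstacle. The only points requiring a little care are: confirming that the regularity $(u,\bm p)\in H^1(\Omega)\times H(\div,\Omega)$ suffices for all the face integrals and jump expressions in $J_h$ to be well defined and for $\jump{u}=\jump{\un\cdot\bm p}=0$ to hold (standard for conforming traces, and already invoked in the proof of Lemma \ref{le_othogonality}); and tracking that the interior-face double-counting in the first step only inflates the constant by a factor bounded by the mesh shape-regularity. I would also emphasize that this lemma provides only the reliability (upper) bound for the indicator, so no efficiency estimate or element-local lower bound is needed.
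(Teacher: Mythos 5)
Your proposal is correct and follows essentially the same route as the paper: the paper compresses your steps by observing that $\sum_{K}\eta_K^2 \leq C\, a_h(u-u_h,\bm{p}-\bm{p}_h;u-u_h,\bm{p}-\bm{p}_h)$ (which is exactly your residual-rewriting using that the exact solution annihilates every term of the functional) and then invokes the continuity bound \eqref{eq_continuity}, whereas you carry out that diagonal continuity estimate term by term. The content is identical, so no further comparison is needed.
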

\begin{proof}
  From the definition of $\eta_K$, it is easy to see that $\sum_{K \in
  \MTh} \eta_K^2 \leq C a_h(u - u_h, \bm{p} - \bm{p}_h; u - u_h, 
  \bm{p} - \bm{p}_h)$.  The estimate \eqref{eq_estimatorup} directly 
  follows from the boundedness property \eqref{eq_continuity}.
\end{proof}

The adaptive procedure consists of loops of the standard form:
\begin{displaymath}
  \text{Solve} \  \rightarrow \  \text{Estimate}\  \rightarrow \
  \text{Mark} \ \rightarrow \  \text{Refine}.
\end{displaymath}
The longest-edge bisection algorithm is used to adaptively refine the
mesh and the detailed adaptive procedure is presented as follow:
\begin{enumerate}[Step 1]
  \item Given the initial mesh $\mc{T}_0$ and a positive parameter
    $\lambda$, and set the iteration number $l = 0$;
  \item Solve the Helmholtz equations on the mesh
    $\mc{T}_l$;
  \item Obtain the error indicator $\eta_K$ for all $K \in \mc{T}_l$
    with respect to the numerical solutions from the Step 2;
  \item Find the minimal subset $\mc{M} \subset
    \mc{T}_l$ such that $\lambda \sum_{K \in \mc{T}_l} \eta_K^2 \leq
    \sum_{K \in \mc{M}} \eta_K^2$ and mark all elements in $\mc{M}$.
  \item Refine all marked elements to generate the next level mesh
    $\mc{T}_{l + 1}$;
  \item If the stop criterion is not satisfied, then go to the Step 2
    and set $l = l + 1$.
\end{enumerate}

\section{Numerical Results}
\label{sec_numericalresults} 
In this section, we present several numerical examples in two and
three dimensions to demonstrate the performance of the proposed method.
We assume that the domain $D = \emptyset$ without indication, so the 
Dirichlet boundary is empty. 
We adopt the BiCGstab solver together with the ILU preconditioner to 
solve the resulting linear algebraic system.
\begin{figure}
  \centering
  \includegraphics[width=0.4\textwidth]{./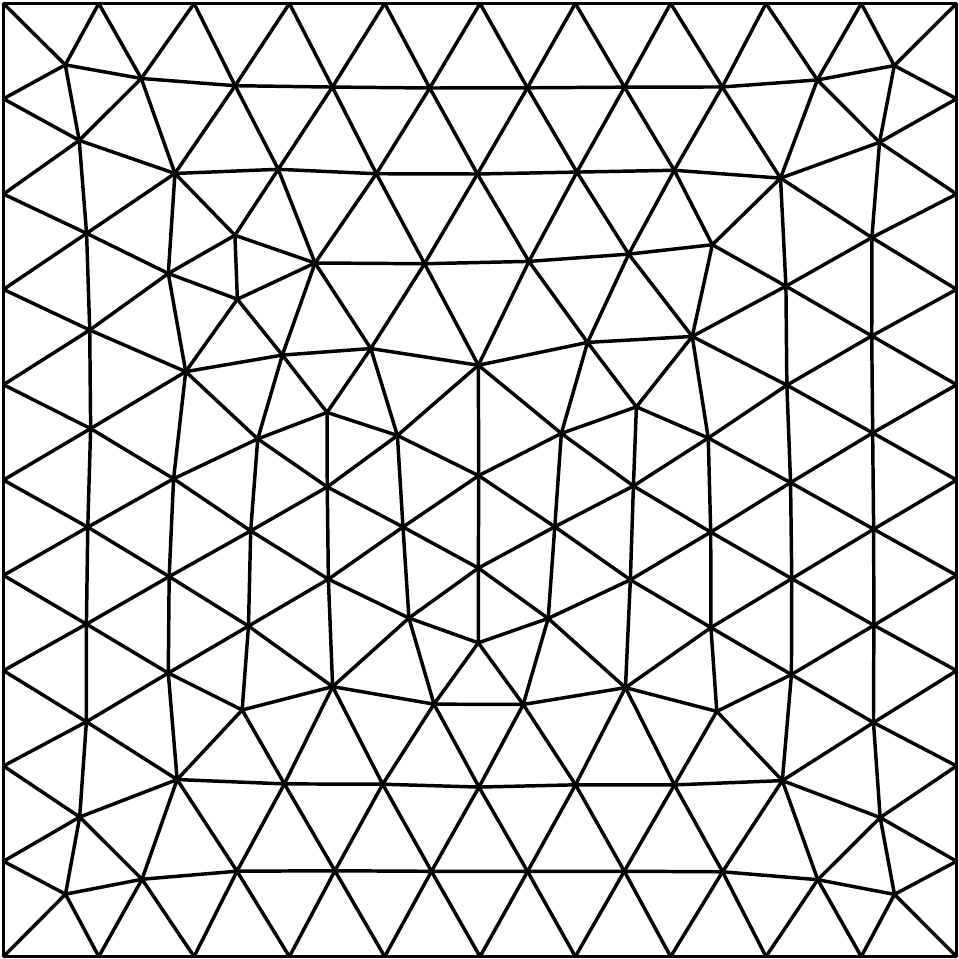}
  \hspace{25pt}
  \includegraphics[width=0.4\textwidth]{./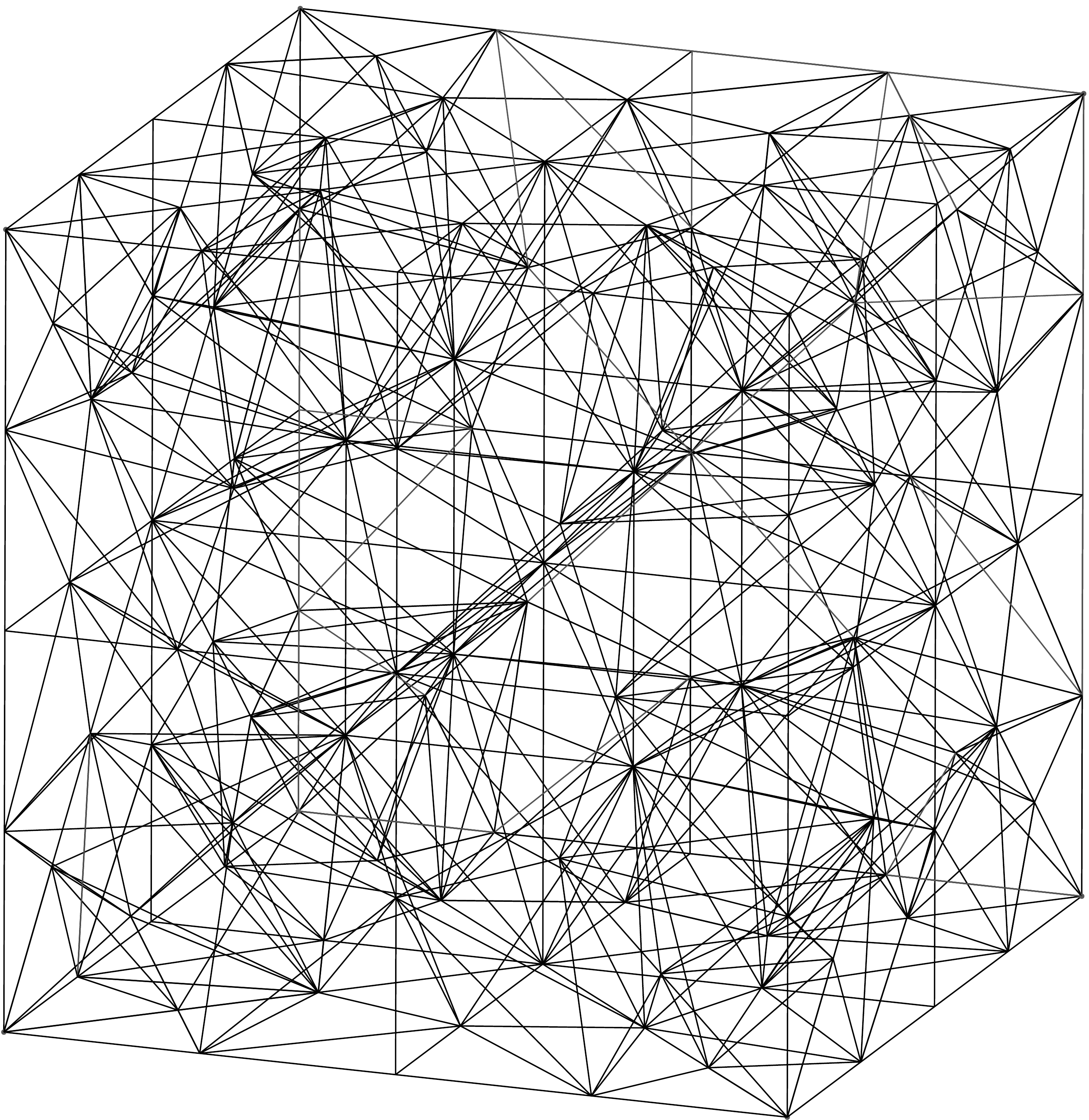}
  \caption{2d triangular partition with $ h = 1/10$ (left) / 3d
  tetrahedral partition with $h = 1/4$ (right).}
  \label{fig_partition}                     
\end{figure}

\noindent \textbf{Example 1.} First, we consider a smooth problem
defined on the unit square domain $\Omega = (0,1)^2$. The exact
solution for the Helmholtz equation is given by \cite{Lee2000first},
\begin{displaymath}
  u(x,y) = \mr{e}^{\ui k (x \cos{\frac{\pi}{5}} + y
  \sin{\frac{\pi}{5}})},
\end{displaymath}
where the source term $f$ and the Robin boundary data $g$ are chosen
accordingly. To obtain the convergence order, we solve this problem on
a series of shape-regular meshes with the mesh size 
$h = 1/5$, $h = 1/10$, $\ldots$, $1/40$, see Fig~.\ref{fig_partition}.  
The convergence histories with the wave number $k = 1,2,8$ for the 
accuracy $m = 1, 2, 3, 4$ are present in Tab.~\ref{tab_ex1k1}, 
Tab.~\ref{tab_ex1k2} and Tab.~\ref{tab_ex1k8}, respectively. 
From the numerical errors, we observe that the convergence order of 
the error under the energy norm $\enorm{u-u_h, \bm{p}-\bm{p}_h}$ is 
$O(h^m)$, which is consistent to the theoretical result in Section 
\ref{sec_method}. In addition, for the $L^2$ errors, we can see that 
$\|u - u_h \|_{L^2(\Omega)}$ and $ \| \bm{p} - \bm{p}_h \|
_{L^2(\Omega)}$ converge to zero at the rate $O(h^{m+1})$ and $O(h^m)$, 
respectively, as the mesh is refined. Due to the finite machine 
precision, the convergence order is lower than the expected result 
for the case $m=4$ with the finest mesh.
The pollution effect occurs as the wavenumber $k$ increases, since 
all the errors between the numerical solution and the exact solution 
become larger. 
\begin{table}
  \centering
    \renewcommand\arraystretch{1.3}
    \scalebox{1.}{
    \begin{tabular}{p{0.5cm} | p{3.3cm} | p{1.6cm} | p{1.6cm} |
      p{1.6cm} | p{1.6cm} | p{1cm} }
      \hline\hline
      $m$ & mesh size & $1/5$ & $1/10$ & $1/20$ & $1/40$ & order \\
      \hline
      \multirow{3}{*}{$1$} & $\enorm{(u-u_h, \bm{p}-\bm{p}_h)}$ &
      6.468e-2 & 3.240e-2 & 1.620e-2 & 8.095e-3 & 1.00 \\
      \cline{2-7}
      & $\| u-u_h \|_{L^2(\Omega)}$ & 2.382e-3 & 6.074e-4 & 1.532e-4
      & 3.844e-5 & 2.00 \\
      \cline{2-7}
      & $\| \bm{p} - \bm{p}_h \|_{L^2(\Omega)} $ & 1.980e-2 & 1.008e-2
      & 5.026e-3 & 2.498e-3 & 0.99 \\
      \hline
      \multirow{3}{*}{$2$} & $\enorm{(u-u_h, \bm{p}-\bm{p}_h)}$ &
      1.385e-3 & 3.492e-4 & 8.758e-5 & 2.193e-5 & 2.00 \\
      \cline{2-7}
      & $\| u-u_h \|_{L^2(\Omega)}$ & 1.905e-5 & 2.378e-6 &
      2.968e-7 & 3.707e-8 & 3.00 \\
      \cline{2-7}
      & $\| \bm{p} - \bm{p}_h \|_{L^2(\Omega)} $ & 4.084e-4 & 1.082e-4
      & 2.765e-5 & 6.981e-6 & 1.99 \\
      \hline
      \multirow{3}{*}{$3$} & $\enorm{(u-u_h, \bm{p}-\bm{p}_h)}$ &
      2.085e-5 & 2.624e-6 & 3.301e-7 & 4.145e-8 & 3.00 \\
      \cline{2-7}
      & $\| u-u_h \|_{L^2(\Omega)}$ & 2.407e-7 & 1.514e-8 &
       9.533e-10 & 5.987e-11 & 4.00 \\
      \cline{2-7}
      & $\| \bm{p} - \bm{p}_h \|_{L^2(\Omega)} $ & 5.464e-6 &
      7.259e-7 & 9.509e-8 & 1.227e-8 & 2.99 \\
      \hline
      \multirow{3}{*}{$4$} & $\enorm{(u-u_h, \bm{p}-\bm{p}_h)}$ &
       2.482e-7 & 1.552e-8 & 9.704e-10 & 1.756e-10 & 3.48 \\
      \cline{2-7}
      & $\| u-u_h \|_{L^2(\Omega)}$ & 2.250e-9 & 6.999e-11 & 2.191e-12 
      & 1.232e-12 & 4.72 \\
      \cline{2-7}
      & $\| \bm{p} - \bm{p}_h \|_{L^2(\Omega)} $ & 6.688e-8 & 
      4.229e-9 & 2.659e-10 & 1.330e-10 & 2.99 \\
      \hline
    \end{tabular}}
    \caption{Convergence history for Example 1 with $k=1$.}
    \label{tab_ex1k1}
\end{table}
\begin{table}
  \centering
    \renewcommand\arraystretch{1.3}
    \scalebox{1.}{
    \begin{tabular}{p{0.5cm} | p{3.3cm} | p{1.6cm} | p{1.6cm} |
      p{1.6cm} | p{1.6cm} | p{1cm} }
      \hline\hline
      $m$ & mesh size & $1/5$ & $1/10$ & $1/20$ & $1/40$ & order \\
      \hline
      \multirow{3}{*}{$1$} & $\enorm{(u-u_h, \bm{p}-\bm{p}_h)}$
      & 2.803e-1 & 1.327e-1 & 6.520e-2 & 3.243e-2 & 1.00 \\
      \cline{2-7}
      & $\| u-u_h \|_{L^2(\Omega)}$
      & 2.112e-2 & 5.557e-3 & 1.412e-3 & 3.550e-4 & 1.99 \\
      \cline{2-7}
      & $\| \bm{p} - \bm{p}_h \|_{L^2(\Omega)} $
      & 4.890e-2 & 2.154e-2 & 1.024e-2 & 5.020e-3 & 1.10 \\
      \hline
      \multirow{3}{*}{$2$} & $\enorm{(u-u_h, \bm{p}-\bm{p}_h)}$
      & 1.109e-2 & 2.793e-3 & 7.006e-4 & 1.754e-4 & 2.00 \\
      \cline{2-7}
      & $\| u-u_h \|_{L^2(\Omega)}$
      & 1.628e-4 & 1.937e-5 & 2.386e-6 & 2.970e-7 & 3.00 \\
      \cline{2-7}
      & $\| \bm{p} - \bm{p}_h \|_{L^2(\Omega)} $
      & 1.629e-3 & 4.323e-4 & 1.106e-4 & 2.792e-5 & 1.99 \\
      \hline
      \multirow{3}{*}{$3$} & $\enorm{(u-u_h, \bm{p}-\bm{p}_h)}$
      & 3.340e-4 & 4.199e-5 & 5.281e-6 & 6.632e-7 & 3.00 \\
      \cline{2-7}
      & $\| u-u_h \|_{L^2(\Omega)}$
      & 3.858e-6 & 2.424e-7 & 1.525e-8 & 9.578e-10 & 4.00 \\
      \cline{2-7}
      & $\| \bm{p} - \bm{p}_h \|_{L^2(\Omega)} $
      & 4.395e-5 & 5.814e-6 & 7.609e-7 & 9.812e-8 & 2.99 \\
      \hline
      \multirow{3}{*}{$4$} & $\enorm{(u-u_h, \bm{p}-\bm{p}_h)}$ &
       7.939e-6 & 4.967e-7 & 3.104e-8 & 1.941e-9 & 3.99 \\
      \cline{2-7}
      & $\| u-u_h \|_{L^2(\Omega)}$ & 7.229e-8 & 2.242e-9 &
       6.978e-11 & 2.367e-12 & 4.96 \\
      \cline{2-7}
      & $\| \bm{p} - \bm{p}_h \|_{L^2(\Omega)} $ & 1.067e-6 &
       6.763e-8 & 4.236e-9 & 2.664e-10 & 3.99 \\
      \hline
    \end{tabular}
    }
    \caption{Convergence history for Example 1 with $k=2$.}
    \label{tab_ex1k2}
\end{table}
\begin{table}
  \centering
    \renewcommand\arraystretch{1.3}
    \scalebox{1.}{
    \begin{tabular}{p{0.5cm} | p{3.3cm} | p{1.6cm} | p{1.6cm} |
      p{1.6cm} | p{1.6cm} | p{1cm} }
      \hline\hline
      $m$ & mesh size & $1/5$ & $1/10$ & $1/20$ & $1/40$ & order \\
      \hline
      \multirow{3}{*}{$1$} & $\enorm{(u-u_h, \bm{p}-\bm{p}_h)}$
      & 1.220e+1 & 9.277e+0 & 5.208e+0 & 1.963e+0 & 0.87 \\
      \cline{2-7}
      & $\| u-u_h \|_{L^2(\Omega)}$
      & 7.373e-1 & 5.652e-1 & 3.170e-1 & 1.174e-1 & 0.87 \\
      \cline{2-7}
      & $\| \bm{p} - \bm{p}_h \|_{L^2(\Omega)} $
      & 7.511e-1 & 5.715e-1 & 3.201e-1 & 1.193e-1 & 0.87 \\
      \hline
      \multirow{3}{*}{$2$} & $\enorm{(u-u_h, \bm{p}-\bm{p}_h)}$
      & 2.916e+0 & 3.122e-1 & 4.786e-2 & 1.127e-2 & 2.66 \\
      \cline{2-7}
      & $\| u-u_h \|_{L^2(\Omega)}$
      & 1.753e-1 & 1.586e-2 & 1.048e-3 & 6.796e-5 & 3.76 \\
      \cline{2-7}
      & $\| \bm{p} - \bm{p}_h \|_{L^2(\Omega)} $
      & 1.772e-1 & 1.724e-2 & 2.041e-3 & 4.507e-4 & 2.90 \\
      \hline
      \multirow{3}{*}{$3$} & $\enorm{(u-u_h, \bm{p}-\bm{p}_h)}$
      & 1.066e-1 & 1.084e-2 & 1.353e-3 & 1.698e-4 & 3.10 \\
      \cline{2-7}
      & $\| u-u_h \|_{L^2(\Omega)}$
      & 3.945e-3 & 9.116e-5 & 4.071e-6 & 2.463e-7 & 4.63 \\
      \cline{2-7}
      & $\| \bm{p} - \bm{p}_h \|_{L^2(\Omega)} $
      & 4.905e-3 & 3.859e-4 & 4.888e-5 & 6.281e-6 & 3.23 \\
      \hline
      \multirow{3}{*}{$4$} & $\enorm{(u-u_h, \bm{p}-\bm{p}_h)}$
      & 8.138e-3 & 5.083e-4 & 3.180e-5 & 1.987e-6 & 4.00 \\
      \cline{2-7}
      & $\| u - u_h \|_{L^2(\Omega)}$
      & 7.508e-5 & 1.828e-6 & 5.472e-8 & 1.688e-9 & 5.14 \\
      \cline{2-7}
      & $\| \bm{p} - \bm{p}_h \|_{L^2(\Omega)}$
      & 2.777e-4 & 1.753e-5 & 1.105e-6 & 6.919e-8 & 3.99 \\
      \hline
    \end{tabular}
    }
    \caption{Convergence history for Example 1 with $k=8$.}
    \label{tab_ex1k8}
\end{table}

\noindent \textbf{Example 2.} For the second example, we consider a 2d
example defined on $\Omega = (-0.5,0.5)^2$
\cite{Feng2009discontinuous},
\begin{eqnarray*}
  \left\{ \begin{array}{ll}
    -\Delta u - k^2 u &= f := \frac{\sin(kr)}{r}, \qquad
    \text{in } \Omega, \\
    \npar{u} + \ui k u &= g, \qquad \text{on } \partial \Omega.
  \end{array}
  \right.
\end{eqnarray*}
The analytical solution can be written as
\begin{displaymath}
  u = \frac{\cos(kr)}{k} - \frac{\cos k + \ui \sin k}{k (J_0(k) + \ui 
  J_1(k))} J_0(kr),
\end{displaymath}                                                     
in the polar coordinates $(r, \theta)$, where $J_v(z)$ are Bessel 
functions of the first kind. 

First, we test the convergence order for the case $k=1$. 
We set the initial mesh size to be $h = 1/5$ and uniformly refine
the mesh for three times to solve this problem. 
The numerical errors are shown in Tab.~\ref{tab_ex2k1} with the degree
of approximation spaces $m = 1,2,3$. We observe that the numerical
error under the energy norm tends to zero at the speed $O(h^m)$ as the
mesh size approachs to zero, and the convergence order of $L^2$ errors
are $O(h^{m+1})$ for the variable $u$ and $O(h^m)$ for the varible
$\bm{p}$. We note that all these results are still consistent with the 
theoretical error estimates. Fig.~\ref{fig_surface} exhibits the 
surface plots of the exact solution and the numerical solution for
$k=100$.

Next, we numerically examine the changes of the error under the energy
norm when the wavenumber $k$ and the mesh size $h$ are correlated. 
We use piecewise linear spaces to approximate the variables $u$ and 
$\bm{p}$, so that the error estimate in Theorem \ref{th_estimate} 
suggests that
\begin{displaymath}
  \enorm{(u-u_h, \bm{p}-\bm{p}_h)} \leq C k^2 h(1+h+k^2h^2) (1+ k^2 
  h^2)^{\frac{1}{2}} (\| u \|_{H^{2}(\Omega)} +
  \| \bm{p} \|_{H^{2}(\Omega)}).      
\end{displaymath}
In Fig.~\ref{fig_k2h}, we plot the relative energy error of the
discontinuous least squares method for $k$ and $h$ determined by $k^2h
= 1$. We see that the error gradually decreases and tends to be
invariant when $k$ becomes large, which verifies our $k$-explicit
error estimates.
\begin{table}
  \centering
    \renewcommand\arraystretch{1.3}
    \scalebox{1.}{
    \begin{tabular}{p{0.5cm} | p{3.3cm} | p{1.6cm} | p{1.6cm} |
      p{1.6cm} | p{1.6cm} | p{1cm} }
      \hline\hline
      $m$ & mesh size & $1/5$ & $1/10$ & $1/20$ & $1/40$ & order \\
      \hline
      \multirow{3}{*}{$1$} & $\enorm{(u-u_h, \bm{p}-\bm{p}_h)}$
      & 3.386e-2 &  1.692e-2 & 8.466e-3 & 4.234e-3 & 1.00 \\
      \cline{2-7}
      & $\| u-u_h \|_{L^2(\Omega)}$
      & 1.848e-3 & 4.664e-4 & 1.170e-4 & 2.929e-5 & 1.99 \\
      \cline{2-7}
      & $\| \bm{p} - \bm{p}_h \|_{L^2(\Omega)} $
      & 2.430e-3 & 1.763e-3 & 9.741e-4 & 4.993e-4 &  0.76 \\
      \hline
      \multirow{3}{*}{$2$} & $\enorm{(u-u_h, \bm{p}-\bm{p}_h)}$
      & 1.135e-3 & 2.841e-4 & 7.106e-5 & 1.777e-5 & 2.00 \\
      \cline{2-7}
      & $\| u-u_h \|_{L^2(\Omega)}$
      & 6.182e-6 & 7.257e-7 & 8.910e-8 & 1.107e-8 & 3.03 \\
      \cline{2-7}
      & $\| \bm{p} - \bm{p}_h \|_{L^2(\Omega)} $
      & 9.629e-5 & 2.723e-5 & 7.185e-6 & 1.839e-6 & 1.90 \\
      \hline
      \multirow{3}{*}{$3$} & $\enorm{(u-u_h, \bm{p}-\bm{p}_h)}$
      & 1.543e-5 & 1.938e-6 & 2.428e-7 & 3.039e-8 & 3.00 \\
      \cline{2-7}
      & $\| u-u_h \|_{L^2(\Omega)}$
      & 2.622e-7 & 1.633e-8 & 1.022e-9 & 6.403e-11 & 4.00 \\
      \cline{2-7}
      & $\| \bm{p} - \bm{p}_h \|_{L^2(\Omega)} $
      & 1.778e-6 & 2.772e-7 & 3.738e-8 & 4.845e-9 & 2.83\\
      \hline
    \end{tabular}
    }
    \caption{Convergence history for Example 2 with $k=1$.}
    \label{tab_ex2k1}
\end{table}

\begin{figure}
  \centering
  \includegraphics[width=0.4\textwidth]{./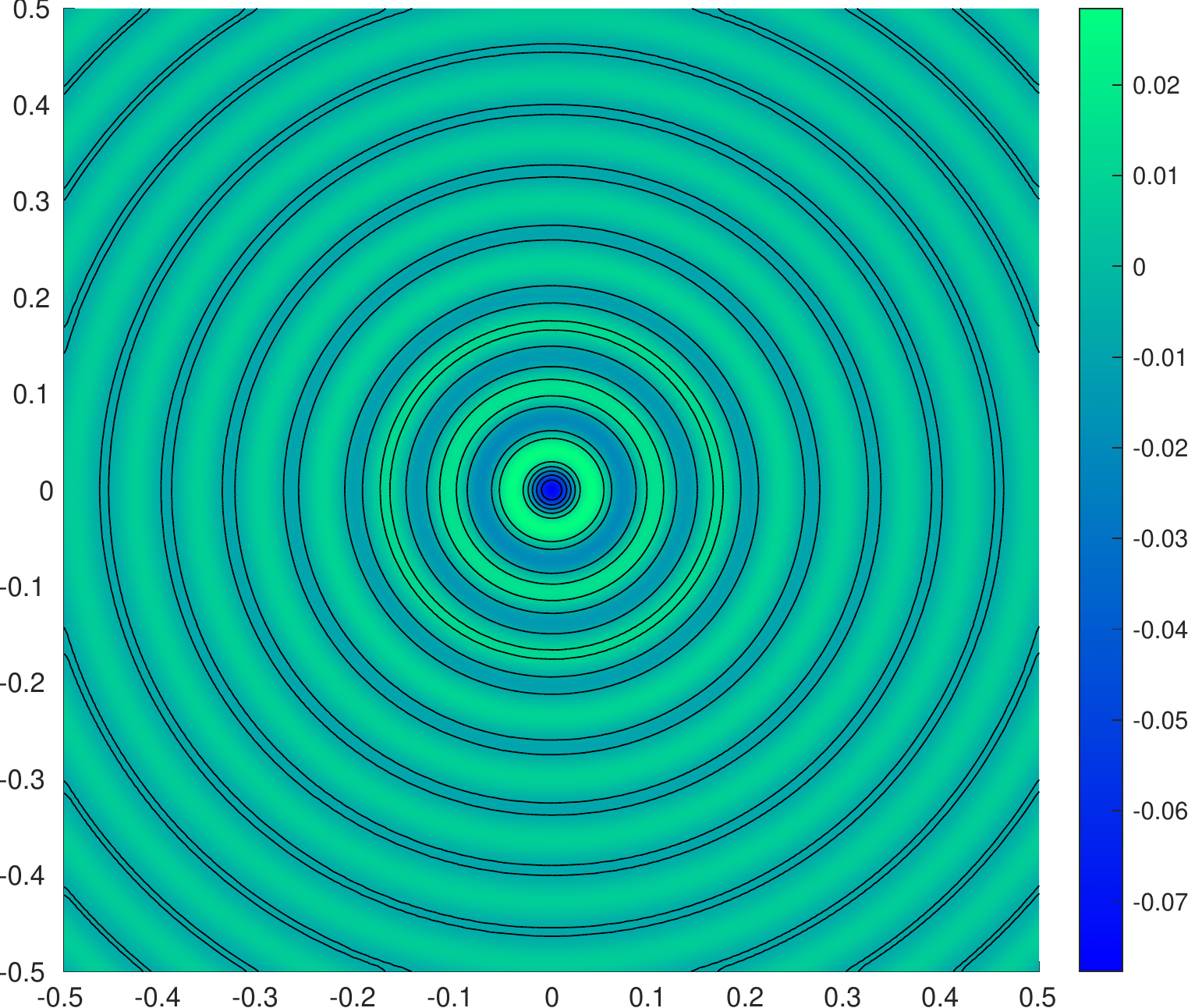}
  \hspace{25pt}
  \includegraphics[width=0.4\textwidth]{./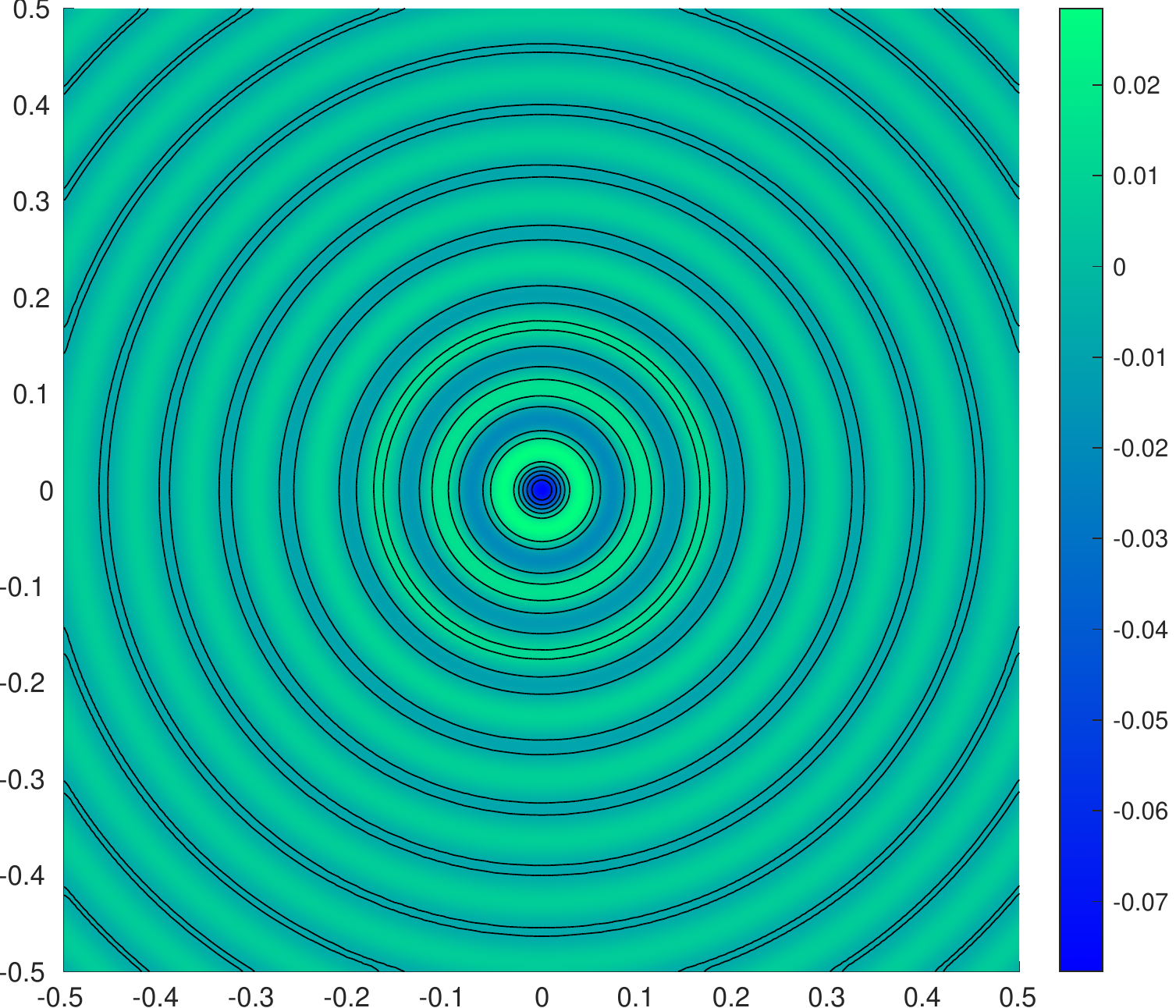}
  \caption{Surface plots for the exact solution of Example 2 
  (left) and the numerical solution with $k=100$ and $m=3$ (right).
  The number of elements is 139264.}
  \label{fig_surface}
\end{figure}

\begin{figure}
  \centering
  \includegraphics[width = 0.6\textwidth]{./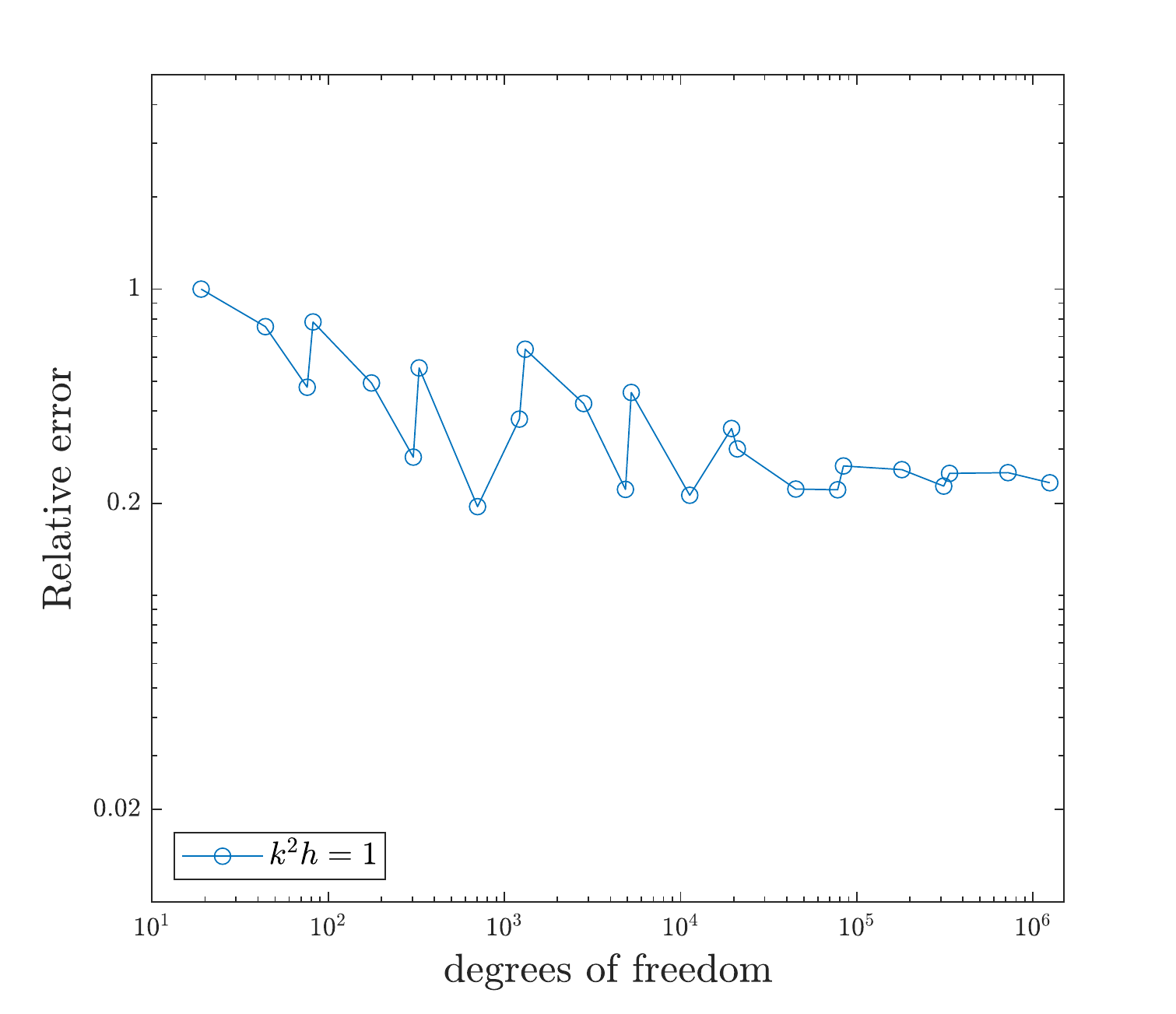}
  \caption{Relative error of Example 2 with $k^2 h = 1$.}
  \label{fig_k2h}
\end{figure}

\noindent \textbf{Example 3.} In this example, we solve a             
three-dimensional problem defined in the cube $\Omega = (-1,1)^3$. 
The analytical solution is selected as
\begin{displaymath}
  u(x,y,z) = \mr{e}^{\ui k (x \sin \theta \cos \phi + y \sin \theta
  \sin \phi + z \cos \theta)},
\end{displaymath}
where the parameter $\theta$ and $\phi$ are set to be $\frac{\pi}{4}$
and $\frac{\pi}{5}$ respectively. We solve this test problem on a 
series of tetrahedral meshes with the resolution $h = 1/4$, $1/8$,
$1/16$, and $1/32$, see Fig.~\ref{fig_partition}. We use the
approximation spaces $\bmr{V}_h^m$ and $\bmr{\Sigma}_h^m$ to
approximate $u$ and $\bm{p}$, respectively. The convergence
histories for $k=1$ are displayed in Tab.~\ref{tab_ex3k1}. We obseve 
that the convergence order under the energy norm $\enorm{\cdot}$ is
still the optimal order $O(h^m)$, and the $L^2$ errors for $u$ and
$\bm{p}$ are still $O(h^{m})$ and $O(h^{m+1})$, respectively. We note 
that all numerical convergence orders are consistent with the 
theoretical error estimate as before.
\begin{table}
  \centering
    \renewcommand\arraystretch{1.3}
    \scalebox{1.}{
    \begin{tabular}{p{0.5cm} | p{3.3cm} | p{1.6cm} | p{1.6cm} |
      p{1.6cm} | p{1.6cm} | p{1cm} }
      \hline\hline
      $m$ & mesh size & $1/4$ & $1/8$ & $1/16$ & $1/32$ & order \\
      \hline
      \multirow{3}{*}{$1$} & $\enorm{(u-u_h, \bm{p}-\bm{p}_h)}$
      & 1.940e-1 & 9.754e-2 & 4.898e-2 & 2.458e-2 & 0.99 \\
      \cline{2-7}
      & $\| u-u_h \|_{L^2(\Omega)}$
      & 1.591e-2 & 4.333e-3 & 1.117e-3 & 2.829e-4 & 1.94 \\
      \cline{2-7}
      & $\| \bm{p} - \bm{p}_h \|_{L^2(\Omega)} $
      & 6.325e-2 & 3.538e-2 & 1.875e-2 & 9.595e-3 &  0.90 \\
      \hline
      \multirow{3}{*}{$2$} & $\enorm{(u-u_h, \bm{p}-\bm{p}_h)}$
      & 1.218e-2 & 3.181e-3 & 8.055e-4 & 2.030e-4 & 1.96 \\
      \cline{2-7}
      & $\| u-u_h \|_{L^2(\Omega)}$
      & 4.952e-4 & 6.136e-5 & 7.612e-6 & 9.542e-7 & 3.00 \\
      \cline{2-7}
      & $\| \bm{p} - \bm{p}_h \|_{L^2(\Omega)} $
      & 3.781e-3 & 1.194e-3 & 3.209e-4 & 8.287e-5 & 1.83 \\
      \hline
      \multirow{3}{*}{$3$} & $\enorm{(u-u_h, \bm{p}-\bm{p}_h)}$
      & 5.628e-4 & 7.438e-5 & 9.458e-6 & 1.198e-6 & 2.95 \\
      \cline{2-7}
      & $\| u-u_h \|_{L^2(\Omega)}$
      & 1.770e-5 & 1.153e-6 & 7.299e-8 & 4.618e-9 & 3.96 \\
      \cline{2-7}
      & $\| \bm{p} - \bm{p}_h \|_{L^2(\Omega)} $
      & 1.741e-4 & 2.808e-5 & 3.818e-6 & 4.991e-7 & 2.81\\
      \hline
    \end{tabular}
    }
    \caption{Convergence history for Example 3 with $k=1$.}
    \label{tab_ex3k1}
\end{table}

\noindent \textbf{Example 4.} In this test, we apply the proposed
method to a problem with low regularity near the origin. The domain
$\Omega$ is selected to be an L-shaped domain $\Omega = (-1,1) \backslash [0,1)
\times (-1,0]$. We set $f=0$ and choose the exact solution, in polar
coordinates $(r, \theta)$, to be
\begin{equation*}
  u(x,y) = J_{\alpha}(kr) \cos(\alpha \theta).
\end{equation*}
This exact solution belongs to the space $H^{\alpha + 1-\epsilon}(\Omega)$.
We select the parameter $\alpha = 2/3$ and set the initial mesh size
to be $h = 1/4$. We uniformly refine the mesh for three times to solve 
this problem for $k=1$. Tab.~\ref{tab_ex4k1} shows the convergence rate of
$\|u-u_h\|_{L^2(\Omega)}$ and $\|\bm{p} - \bm{p}_h \|_{L^2(\Omega)}$
with $m=1,2,3$. The convergence rate of $\|\bm{p} - \bm{p}_h \|
_{L^2(\Omega)}$ is about $0.67$, which is in agreement with with the 
regularity of the exact solution and error estimates. For the error 
$\|u-u_h\|_{L^2(\Omega)}$, we note that the convergence rate is lower
than its regularity exponent, and seems to decrease when $m$
increases. 
\begin{table}
  \centering
    \renewcommand\arraystretch{1.3}
    \scalebox{1.}{
    \begin{tabular}{p{0.5cm} | p{3.3cm} | p{1.6cm} | p{1.6cm} |
      p{1.6cm} | p{1.6cm} | p{1cm} }
      \hline\hline
      $m$ & mesh size & $1/4$ & $1/8$ & $1/16$ & $1/32$ & order \\
      \hline
      \multirow{2}{*}{$1$} & $\| u-u_h \|_{L^2(\Omega)}$
      & 1.019e-2 & 3.307e-3 & 1.109e-3 & 3.872e-4 & 1.57 \\
      \cline{2-7}
      & $\| \bm{p} - \bm{p}_h \|_{L^2(\Omega)} $
      & 8.031e-2 & 4.900e-2 & 3.061e-2 & 1.920e-2 & 0.67 \\
      \hline
      \multirow{2}{*}{$2$} & $\| u-u_h \|_{L^2(\Omega)}$
      & 1.292e-3 & 4.483e-4 & 1.641e-4 & 6.226e-5 & 1.45 \\
      \cline{2-7}
      & $\| \bm{p} - \bm{p}_h \|_{L^2(\Omega)} $
      & 4.023e-2 & 2.619e-2 & 1.652e-2 & 1.041e-2 & 0.66 \\
      \hline
      \multirow{2}{*}{$3$} & $\| u-u_h \|_{L^2(\Omega)}$
      & 7.098e-4 & 2.677e-4 & 1.032e-4 & 4.309e-5 & 1.37 \\
      \cline{2-7}
      & $\| \bm{p} - \bm{p}_h \|_{L^2(\Omega)} $
      & 2.073e-2 & 1.706e-2 & 1.075e-2 & 6.776e-3 & 0.66\\
      \hline
    \end{tabular}
    }
    \caption{Convergence history for Example 4 with $k=1$.}
    \label{tab_ex4k1}
  \end{table}

\noindent \textbf{Example 5.} In this example, we consider a          
circumferentially harmonic radiation from a rigid infinite circular   
cylinder of radius $a$ \cite{Harari1992galerkin}. The exact solution 
is chosen by 
\begin{equation}
  u(x,y) = \frac{H^{(1)}_n(kr) \cos n\theta}{H^{(1)}_n (ka)},
\end{equation}
where $H^{(1)}_n$ is the Hankel function of the first kind of order 
$n$. The domain is set to be a circular ring $\Omega = B(0,2a) 
\slash B(0,a)$. We apply the Dirichlet boundary condition on $\partial 
B(0,a)$ and the Robin boundary condition on $\partial B(0,2a)$. In our
numerical simulation, we compute the fifth circumferential mode ($n =
4$) and choose $k=\pi$, $a=1$. We use the discontinuous piecewise linear 
approximation spaces $\bmr{V}_h^1 \times \bmr{\Sigma}_h^1$ for this 
example. We use the polygon approximation to the domain $\Omega$, and 
then triangulate it into a shape-regular mesh, see Fig.~\ref{fig_ex5mesh} 
. In Fig.~\ref{fig_exhan}, we show the contours of the real part of the 
numerical solution and the exact solution, respectively. We observe 
that the least squres discontinuous finite element solution recovers 
the essential features of the exact solution.
\begin{figure}
  \centering
  \includegraphics[width=0.5\textwidth]{./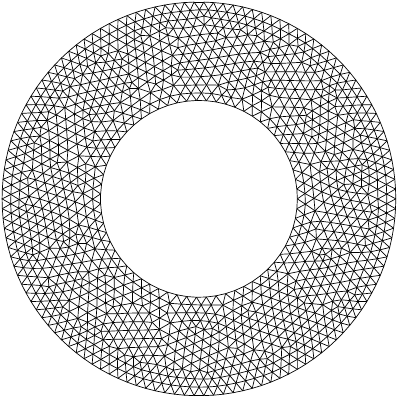}
  \caption{The mesh used in Example 5 with 8908 elements.}
  \label{fig_ex5mesh}
\end{figure}
\begin{figure}
  \centering
  \includegraphics[width=0.4\textwidth]{./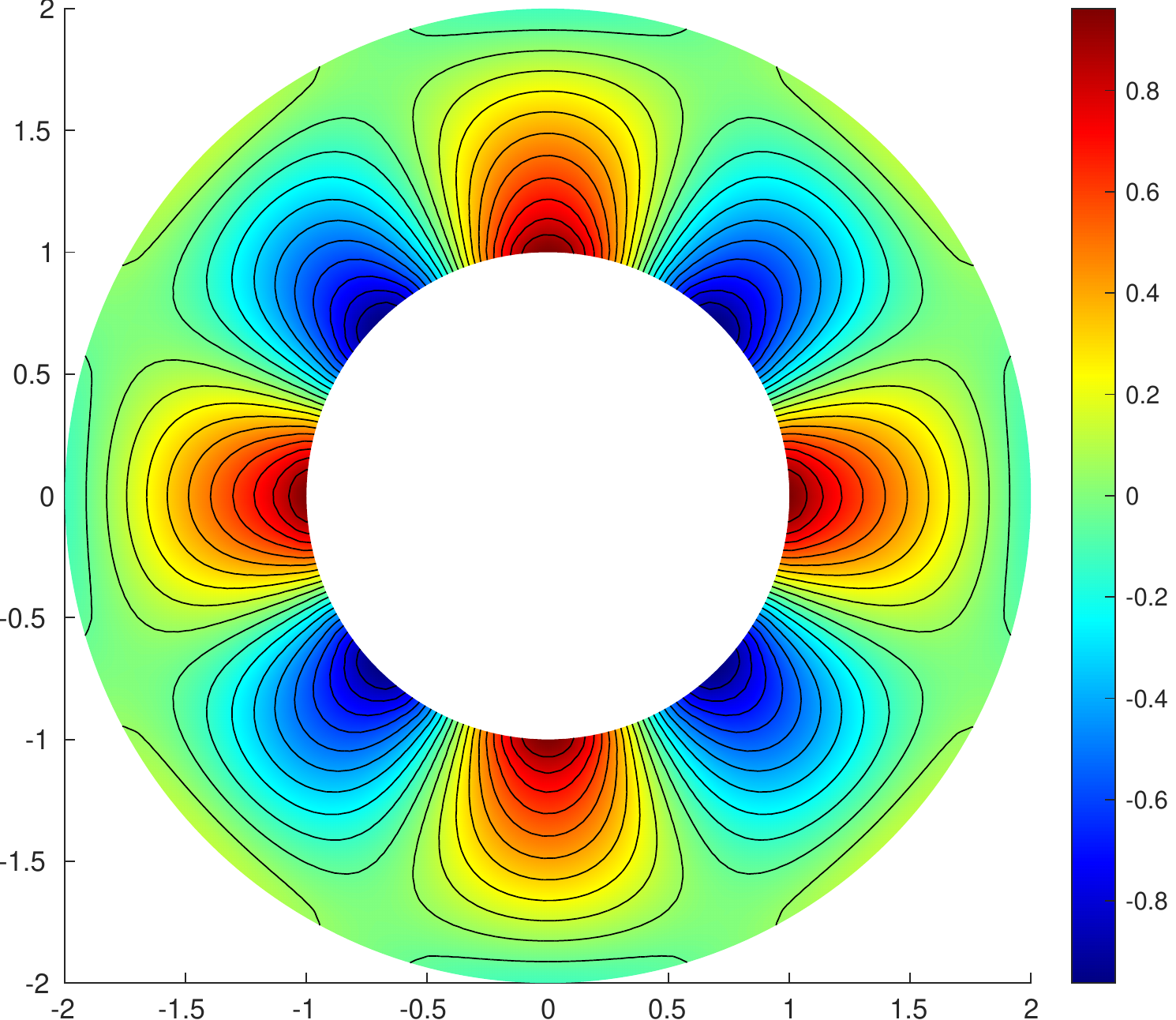}
  \hspace{15pt}
  \includegraphics[width=0.4\textwidth]{./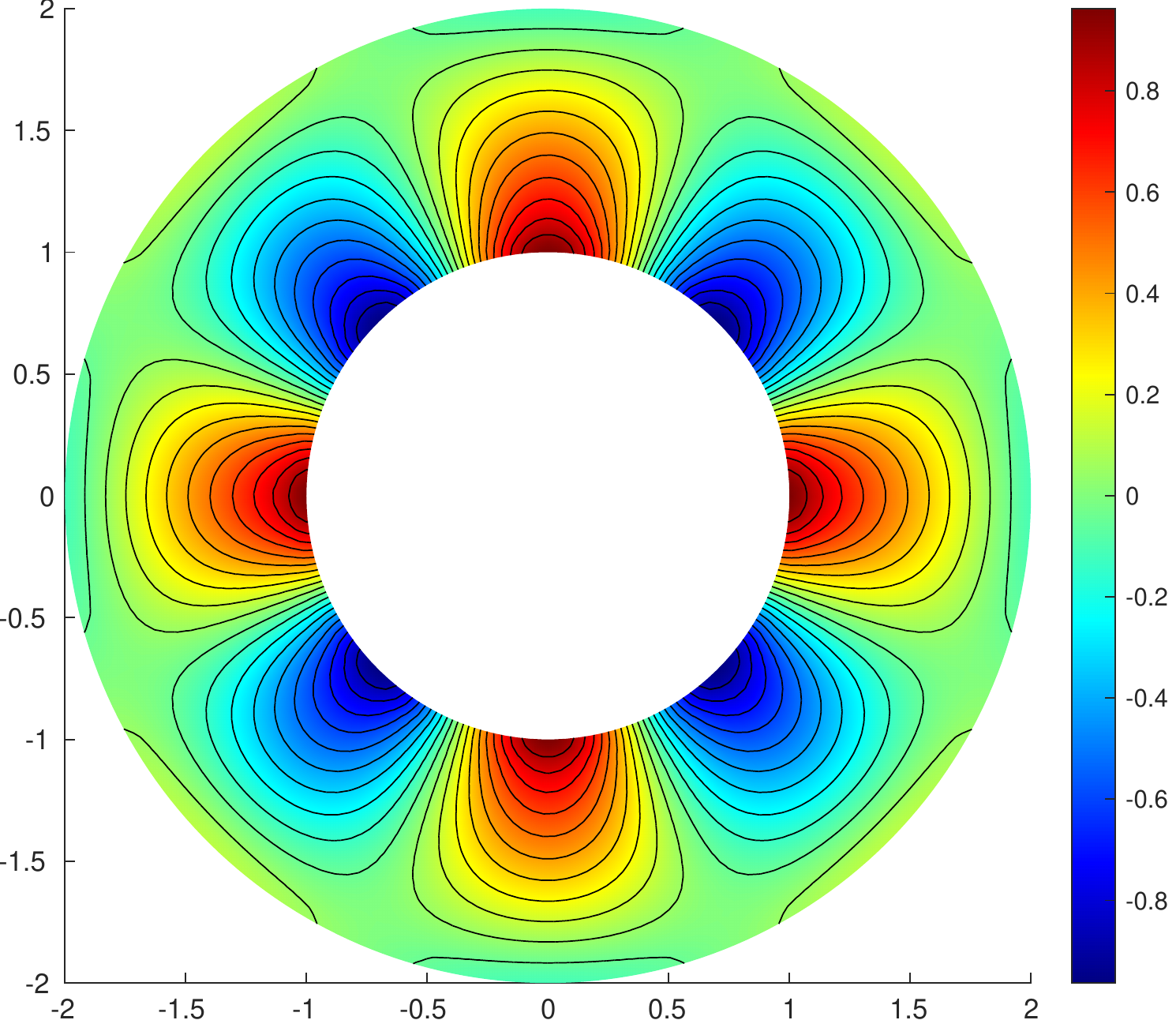}
  \caption{The real part of the numerical solution (left) and the exact
  solution (right) with $a=1$, $k=\pi$ and $m=1$.}
  \label{fig_exhan}
\end{figure}

\noindent \textbf{Example 6.} In this example, we test the performance 
of our adaptive algorithm proposed in Section \ref{sec_method}.
We solve the low-regularity problem defined in Example 4 with $\alpha
= 2/3$. For the adaptive algorithm, we choose the parameter 
$\lambda = 0.45$ and we use the longest-edge bisection algorithm to 
refine the mesh. We use approximation spaces with $m=1$ to solve the 
problem. In Fig.~\ref{fig_ex6}, we compare the original mesh (left) 
with the mesh after 5 adaptive refinement steps (right). The mesh is 
refined remarkably around the corner $(0, 0)$, where the exact solution        
contains a singularity. The convergence history under $L^2$ norms is 
displayed in Fig.~\ref{fig_adaptive}. From Fig.~\ref{fig_adaptive}, we
see that the convergence orders of $\| u- u_h \|_{L^2(\Omega)}$ and 
$\| \bm{p}-\bm{p}_h \|_{L^2(\Omega)}$ are $O(N^{-1})$ and $O(N^{-1/2})$ 
, respectively, where $N$ is the number of degrees of freedom. These
results match the convergence rates for smooth cases in Example
1 and Example 2. The convergence rates are better than that in
Tab.~\ref{tab_ex4k1}, where the $L^2$ errors tend to zero at the speed        
$O(N^{-1.57/2})$ and $O(N^{-0.67/2})$ for the variables $u$ and  
$\bm{p}$, respectively.
\begin{figure}
  \centering
  \includegraphics[width=0.4\textwidth]{./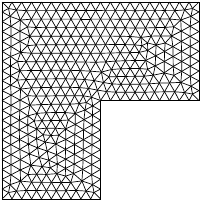}
  \hspace{15pt}
  \includegraphics[width=0.4\textwidth]{./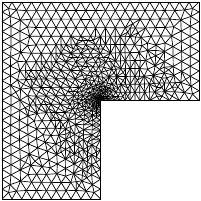}
  \caption{The initial mesh (left) / The mesh after 5 adaptive
  refinement steps (right)}
  \label{fig_ex6}
\end{figure}

\begin{figure}
  \centering    
  \includegraphics[width=0.4\textwidth]{./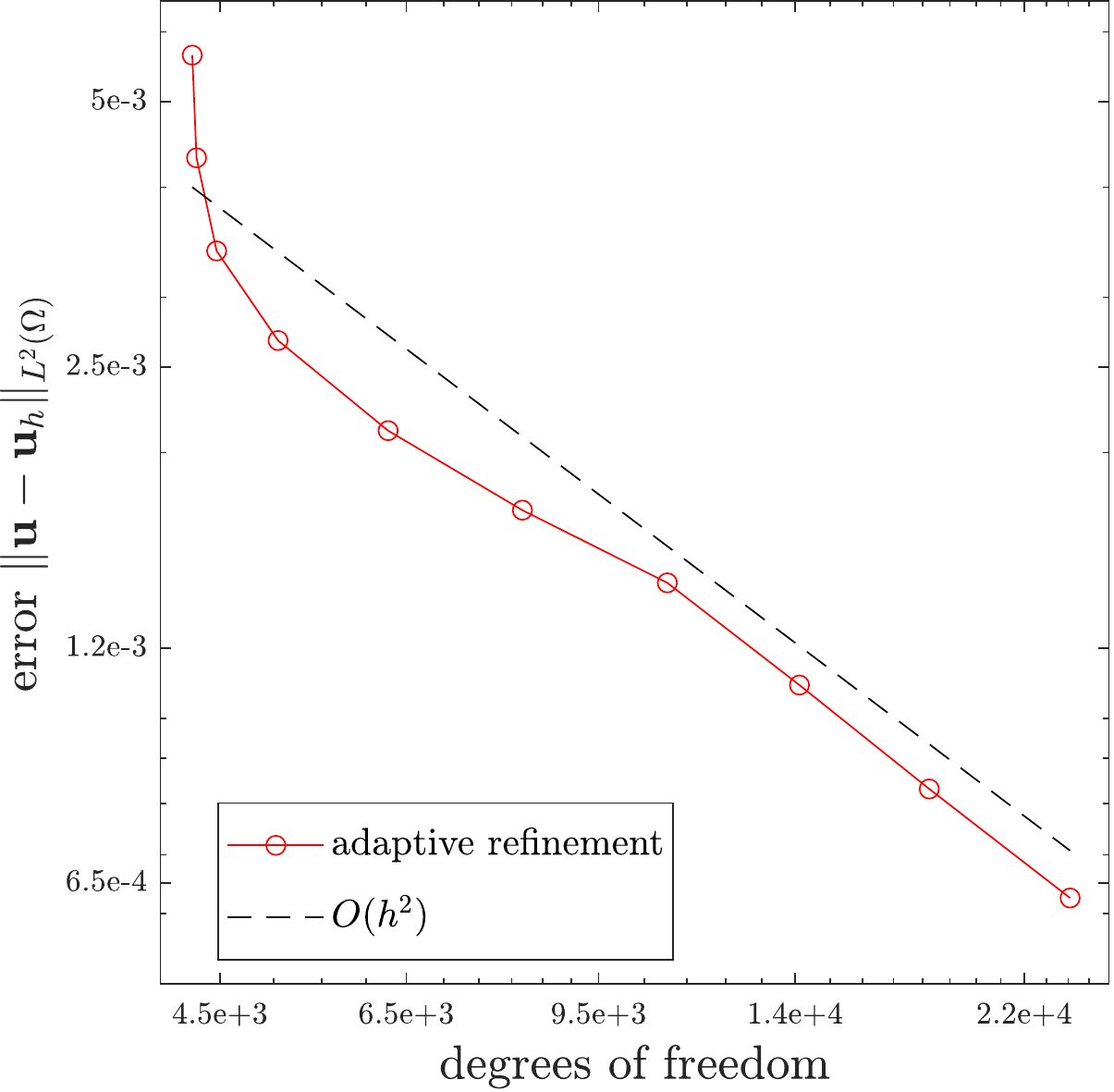}              
  \hspace{15pt}
  \includegraphics[width=0.4\textwidth]{./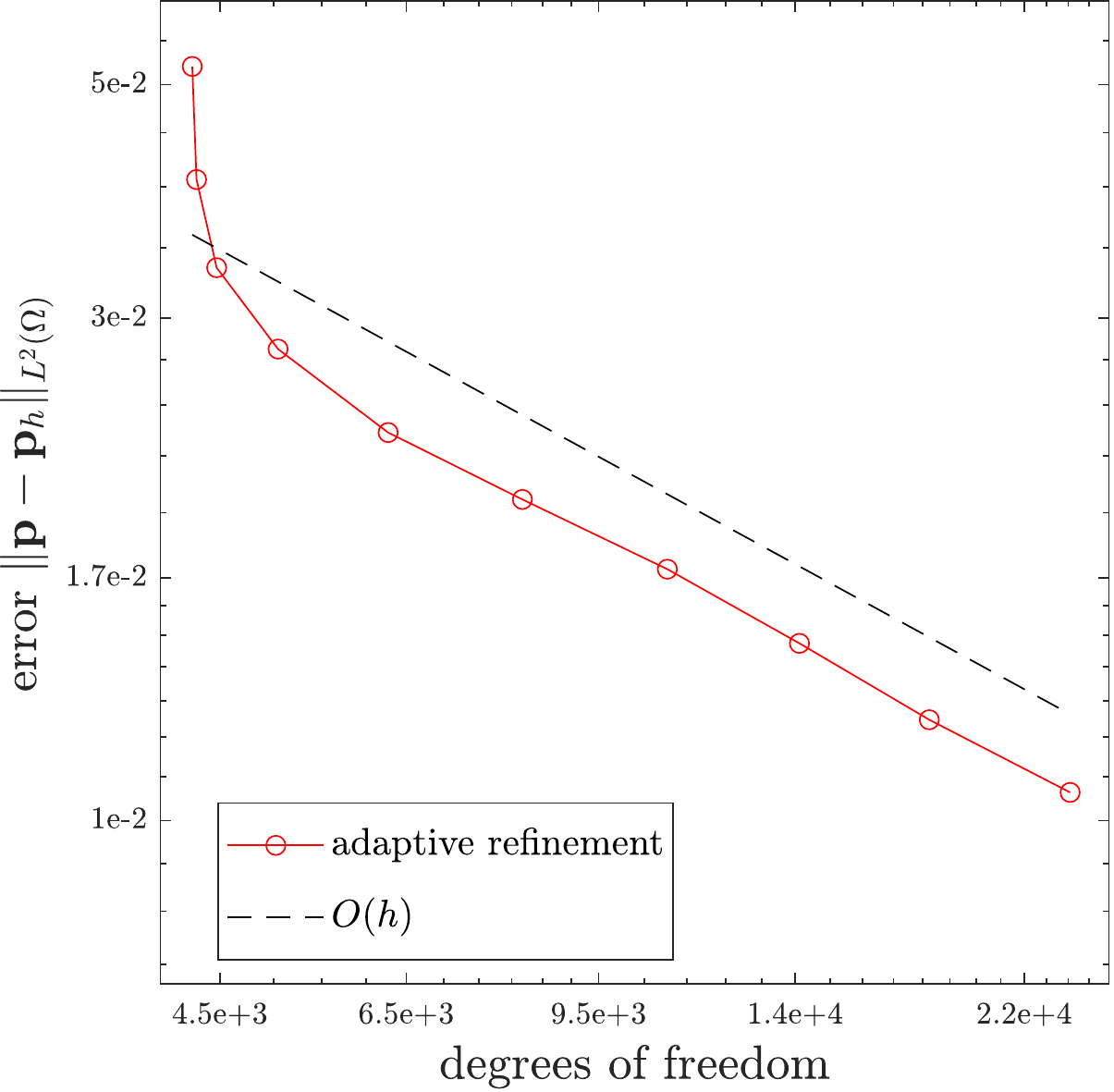}
  \caption{Convergence history for Example 6.}
  \label{fig_adaptive}
\end{figure}

\section{Conclusions}
\label{sec_conclusions}
We proposed a discontinuous least squares finite element method for
the Helmholtz equation. We designed an $L^2$ norm least squares 
functional with the weak imposition of the continuity across the 
interior faces, and minimized it over the discontinuous approximation
space $\bmr{V}_h^m \times \bmr{\Sigma}_h^m$. We established the
$k$-explicit error estimates for our method. The convergence rates
were derived to be optimal under the energy norm and suboptimal under
the $L^2$ norm for a fixed wavenumber $k$. Particularly, it was proved 
that our method is stable without any constraint on the mesh size.
Numerical results in both two and three dimensions verified the 
accuracy of our method. 

\section*{Acknowledgements}
This research was supported by the National Natural Science Foundation in China
(No. 11971041).

\bibliographystyle{amsplain}
\bibliography{../ref}

\end{document}